\theoremstyle{plain}
\newtheorem{theorem}{Theorem}[section]
\newtheorem{lemma}[theorem]{Lemma}
\newtheorem{proposition}[theorem]{Proposition}
\theoremstyle{definition}
\newtheorem{definition}[theorem]{Definition}
\newtheorem{example}[theorem]{Example}
\newtheorem{remark}[theorem]{Remark}
\theoremstyle{plain}
\newcommand{\x}{\mathbf{x}}
\newcommand{\R}{\mathbb{R}}
\crefname{equation}{}{} 
\Crefname{equation}{}{} 
\crefname{theorem}{theorem}{theorems} 
\Crefname{theorem}{Theorem}{Theorems} 
\numberwithin{equation}{section}
\begin{document}
\title[Turnpike property and Hamilton-Jacobi-Bellman equation]{The turnpike property and the long-time behavior of the Hamilton-Jacobi-Bellman equation for finite-dimensional LQ control problems}
\thanks{This project has received funding from the European Research Council (ERC) under the European Union’s Horizon 2020 research and innovation programme (grant agreement NO. 694126-DyCon). 
The work of E.Z. is partially funded by the Alexander von Humboldt-Professorship program, the European Unions Horizon 2020 research and innovation programme under the Marie Sklodowska-Curie grant agreement No.765579-ConFlex, 
the Grant MTM2017-92996-C2-1-R COSNET of MINECO (Spain), the Air Force Office of Scientific Research (AFOSR) under Award NO. FA9550-18-1-0242.  and the Transregio 154 Project ‘‘Mathematical Modelling, Simulation and Optimization Using the Example of Gas Networks’’ of the German DFG}

\author{Carlos Esteve}
\address {Carlos Esteve, Dario Pighin \newline \indent
{Departamento de Matem\'aticas, \newline \indent
Universidad Aut\'onoma de Madrid},
\newline \indent
{28049 Madrid, Spain}
\newline \indent \hspace{2.5cm} \text{and} \newline \indent
{Chair of Computational Mathematics, Fundaci\'on Deusto}
\newline \indent
{Av. de las Universidades, 24}
\newline \indent
{48007 Bilbao, Basque Country, Spain}
}
\email{\texttt{carlos.esteve@uam.es}, \texttt{dario.pighin@uam.es}}

\author{Hicham Kouhkouh}
\address {Hicham Kouhkouh \newline \indent
{Dipartimento di Matematica, \newline \indent
Universit\`a di Padova},
\newline \indent
{via Trieste, 63; I-35121 Padova, Italy}
}
\email{\texttt{kouhkouh@math.unipd.it}}

\author{Dario Pighin}

\author{Enrique Zuazua}
\address{Enrique Zuazua \newline \indent
{Chair in Applied Analysis, Alexander von Humboldt-Professorship
\newline \indent
Department of Data Science, \newline \indent
Friedrich-Alexander-Universit\"at Erlangen-N\"urnberg}
\newline \indent
{91058 Erlangen, Germany}
\newline \indent \hspace{2.5cm} \text{and} \newline \indent
{Chair of Computational Mathematics, Fundación Deusto}
\newline \indent
{Av. de las Universidades, 24}
\newline \indent
{48007 Bilbao, Basque Country, Spain}
\newline \indent \hspace{2.5cm} \text{and} \newline \indent
{Departamento de Matemáticas, \newline \indent
Universidad Autónoma de Madrid,}
\newline \indent
{28049 Madrid, Spain}
}
\email{\texttt{enrique.zuazua@fau.de}}

\date{\today}
\begin{abstract}
	We analyze the consequences that the so-called turnpike property has on the long-time behavior of the value function corresponding to a finite-dimensional linear-quadratic optimal control problem with general terminal cost and constrained controls.
    We prove that, when the time horizon $T$ tends to infinity, the value function asymptotically behaves as $W(x) + c\, T + \lambda$, and we provide a control interpretation of each of these three terms, making clear the link with the turnpike property.
	As a by-product, we obtain the long-time behavior of the solution to the associated Hamilton-Jacobi-Bellman equation in a case where the Hamiltonian is not coercive in the momentum variable. As a result of independent interest, we showed that linear-quadratic optimal control problems with constrained control enjoy a turnpike property, also particularly when the steady optimum may saturate the control constraints. 
\end{abstract}

\subjclass[2010]{49N25, 49N20, 34H05, 37J25}
\keywords{Optimal control problems, long-time behavior, the turnpike property, Hamilton-Jacobi-Bellman equations, linear-quadratic.}
\maketitle

\section{Introduction}

\subsection{Motivation and setting}


We are interested in the asymptotic  behavior of the value function associated to an optimal control problem, when the time-horizon tends to infinity.
In particular, we want to deduce it as a consequence of a property that is satisfied by a large class of optimal control problems and arises when the time horizon is considered to be sufficiently large.
This is the so-called \emph{turnpike property}, which establishes that the optimal strategy in a controlled system during a sufficiently long time interval is to quickly stabilize from the initial state to the steady optimal one and not leave the latter until the time is close to the end. Morevoer, as a by-product of our study, we obtain the long-time behavior of the associated Hamilton-Jacobi-Bellman (HJB for short) equation in a case where the Hamiltonian is not strictly convex and not even coercive, a scenario much less considered in the literature.
This indicates that, in some cases, this kind of assumptions on the structure of the Hamiltonian can be merely relaxed to weaker assumptions concerning the controllability and observability of the optimal control problem.





Let us introduce the mathematical framework that we will use throughout the paper. We denote by $\mathcal{M}_{n,m}(\R)$ (resp. $\mathcal{M}_{n}(\R)$) the set of matrices over $\R$ with $n$ rows and $m$ columns (resp. $n$ rows and columns).
We consider the following optimal control problem in the finite-dimensional linear-quadratic setting, with constrained controls:
for a given time horizon $T>0$ and an initial state $x\in \R^n$,
we denote the trajectory of the system by $y (\cdot)$, which is determined by the solution to the following controlled linear ODE:
\begin{equation}\label{eq: linear ODE}
\begin{array}{ll}
\dot{y}(s) = A\, y(s) + B\, u(s), &  s\in (0,T) \\
y (0) = x,
\end{array}
\end{equation}
where $A\in \mathcal{M}_n  (\R)$, $B\in \mathcal{M}_{n,m} (\R)$, with $n,m\geq 1$, are two given matrices,
and $u$, that will be referred to as the control, can be any function in the set of admissible controls $\mathcal{U}_T \coloneqq  L^2(0,T; U)$, i.e. square-integrable functions $[0,T]\to U$, where $U\subseteq\R^m$ is a given nonempty closed and convex set, which can be either bounded or not.

The optimal control problem is to minimize, over the admissible controls $u\in \mathcal{U}_T$, the cost functional
\begin{equation}\label{eq: functional}
J_{T,x} (u) \coloneqq \dfrac{1}{2} \int_0^T \left[\| u(s)\|^2 + \| C \, y (s)-z\|^2\right] ds  + g(y(T)), 
\end{equation}
where $C\in \mathcal{M}_n(\R)$ is a given matrix, $z\in \R^n$ is the prescribed \emph{target} and $g:\R^n\to \R$ is a given locally Lipschitz function bounded from below, known as the \emph{final cost}.  The value function associated to the optimal control problem \eqref{eq: linear ODE}-\eqref{eq: functional} is defined as
\begin{equation}\label{eq: value function}
V (x,T) \coloneqq \inf_{u\in \mathcal{U}_T}  J_{T,x}(u),\quad \text{s.t.}\; \eqref{eq: linear ODE}.
\end{equation}
We also consider the associated stationary problem, consisting in  the minimization of the steady functional 
\begin{equation}\label{eq: steady_functional}
J_s (u,y) := \dfrac{1}{2} \left( \|u\|^2 + \|C\, y-z\|^2\right), 
\end{equation}
over the set of controlled steady states
\begin{equation}\label{def_M}
    M_s\coloneqq \left\{\left(u,y\right)\in U\times \mathbb{R}^n \ | \ Ay+Bu = 0\right\}.
\end{equation}
We denote by $V_{s}$ the value of the optimal steady cost, defined as
\begin{equation}\label{V_s def}
    V_{s} := \min \left\{ J_{s}(u,y),\  \text{s.t.}\ (u,y)\in M_s\, \right\}.
\end{equation}


\subsection{A turnpike result for control-constrained LQR}

In \cite{porretta2013long}, it is proved that, in the case where
$U=\mathbb{R}^m$, exponential turnpike holds under \textit{controllability} of $(A,B)$ and \textit{observability} of $(A,C)$. 
As we shall prove in \Cref{th_TURNPIKE} below, for the constrained control case (i.e. $U\subseteq \mathbb{R}^m$), the validity of a weaker version of the turnpike property follows from the \textit{detectability} of $(A,C)$, the \textit{invertibility} of $A$ and the $U$-\textit{stabilizability} of $(A,B)$ (see Definition \ref{definition_U_stab}). In \cite{grune2021relation}, for the constrained case, the validity of an even weaker version of the turnpike property (measure turnpike) was investigated under strong dissipativity assumptions and for the case when the steady optimum is at the interior of the admissible set. 
We point out that we shall not make the latter assumption, and thus, our turnpike result also applies to the case when the steady optimal control is on the boundary of the set of admissible controls $U$.
In the sequel, we will sometimes refer to the steady optimal control $\overline{u}$ and its corresponding state $\overline{y}$ as the turnpike.
Note that the steady functional $J_s$, hence also the turnpike, are independent of the final cost $g$. 

Let us first of all make precise the notion of constrained stabilizability.

\begin{definition}\label{definition_U_stab}
Let $A\in\mathcal{M}_{n\times n}(\mathbb{R})$, $B\in\mathcal{M}_{n,m}(\mathbb{R})$ and $U\subseteq \mathbb{R}^m$ be closed and convex. We say that $(A,B)$ is $U$-\textit{stabilizable} to a trajectory $y_{1}$, with some control $u_1\in L^2_{loc}(0,+\infty;U)$, solution to 
    \begin{equation}\label{target_controlled_ODE}
        \frac{d}{ds}y_1(s)=Ay_1(s)+Bu_1\hspace{1 cm} s\in  (0,+\infty),
    \end{equation}
if there exists a control $u\in L^2_{loc}(0,+\infty;U)$ for which the corresponding trajectory $y$ solves \eqref{target_controlled_ODE} with an initial datum $x\in \mathbb{R}^n$, and such that 
    \begin{enumerate}
        \item $u-u_1\in L^1(0,+\infty;\mathbb{R}^m)\cap L^2(0,+\infty;\mathbb{R}^m)$;
        \item $y-y_1\in L^1(0,+\infty;\mathbb{R}^n)\cap L^2(0,+\infty;\mathbb{R}^n)$;
        \item $\left\|u-u_1\right\|_{L^1\cap L^2}+\left\|y-y_1\right\|_{L^1\cap L^2}\leq K\left\|y_1\left(0\right)-x\right\|$,\\
        where $\left\|\cdot\right\|_{L^1\cap L^2}\coloneqq \left\|\cdot\right\|_{L^1\left(0,+\infty\right)}+\left\|\cdot\right\|_{L^2\left(0,+\infty\right)}$\; and $K=K(A,B,U)$.
    \end{enumerate}
We say that $(A,B)$ is $U$-\textit{stabilizable}, when it is $U$-\textit{stabilizable} to any trajectory $y_{1}$ solution to \eqref{target_controlled_ODE} with some control $u_1\in L^2_{loc}(0,+\infty;U)$ and for every initial datum $x\in \mathbb{R}^n$.
\end{definition}

We refer to Remark \ref{rmk_stab} for additional comments on the notion of $U$-stabilizability.

We may now give the statement of the turnpike result for the optimal control problem \eqref{eq: linear ODE}--\eqref{eq: functional}  with control constraints.

\begin{theorem}\label{th_TURNPIKE}
	Assume $(A,B)$ is $U$-stabilizable, $(A,C)$ is detectable and $A$ is invertible. Let $(\overline{y},\overline{u})$ be the unique pair in $M_s$ minimizing \eqref{eq: steady_functional}, and for any $T>0$, let $u_{_T}\in \mathcal{U}_T$ be an optimal control minimizing $J_{T,x}$ in \eqref{eq: functional}, and let $y_{_{T}}$ be its associated state trajectory, solution to \eqref{eq: linear ODE}. Then, for any $\varepsilon \in (0,1)$, there exists $\tau=\tau(A,B,C,U,x,z,g,\varepsilon)>0$ such that, if $T\geq 2\tau+1$, we have
\begin{equation}\label{epsturnpike}
    \left\|y_{_{T}}\left(t\right)-\overline{y}\right\|<\varepsilon
    \qquad \forall t\in \left[\tau,T-\tau\right].
\end{equation}
Furthermore, there exists $K=K(A,B,C,U,x,z,g)$, such that
\begin{equation}\label{lemma_unif_boun_2_intro}
    \int_0^T \left[\| u_{_{T}}(s)-\overline{u}\|^2 + \| y_{_{T}}(s)-\overline{y}\|^2\right] ds\leq K.
\end{equation}
\end{theorem}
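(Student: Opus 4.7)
The plan is to sandwich $V(x,T)$ between $V_sT\pm O(1)$ so that \eqref{lemma_unif_boun_2_intro} follows from an absorption and \eqref{epsturnpike} from a Chebyshev/Bellman consequence. For the upper half, I would apply $U$-stabilizability of $(A,B)$ with constant target $(u_1,y_1)=(\overline u,\overline y)$ and initial datum $x$, obtaining an admissible pair $(u_c,y_c)$ with $u_c-\overline u$ and $y_c-\overline y$ in $L^1\cap L^2(0,+\infty)$ and norms bounded by $\|x-\overline y\|$. Expanding $J_{T,x}(u_c)$ around $(\overline u,\overline y)$, the leading term is $V_sT$; the quadratic remainders are controlled by the $L^2$-norms, the cross-terms by the $L^1$-norms, and the terminal value $g(y_c(T))$ is bounded uniformly in $T$ because $y_c-\overline y$ and its derivative $A(y_c-\overline y)+B(u_c-\overline u)$ both lie in $L^2(0,\infty)$, so $y_c-\overline y\in H^1(0,+\infty)\hookrightarrow C_0$, and $g$ is locally Lipschitz. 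Hence $V(x,T)\le J_{T,x}(u_c)\le V_sT+K_1$ for some $K_1=K_1(x,z,g)$ independent of $T$.

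\textbf{Lower bound via steady KKT.} Invertibility of $A$ parametrizes $M_s$ by $\overline y=-A^{-1}B\overline u$, reducing \eqref{V_s def} to a convex minimization over $u\in U$ which produces a Lagrange multiplier $\overline p\in\R^n$ with $A^T\overline p+C^T(C\overline y-z)=0$ together with the variational inequality $\langle\overline u+B^T\overline p,\,v-\overline u\rangle\ge 0$ for all $v\in U$. For any admissible $u$ with state $y$, setting $\tilde u=u-\overline u$, $\tilde y=y-\overline y$, expanding $J_{T,x}(u)$, substituting $C^T(C\overline y-z)=-A^T\overline p$, and integrating by parts via $\dot{\tilde y}=A\tilde y+B\tilde u$ gives
\[
J_{T,x}(u)=V_sT+\tfrac12\int_0^T\bigl[\|\tilde u\|^2+\|C\tilde y\|^2\bigr]ds+\int_0^T\langle\overline u+B^T\overline p,\tilde u\rangle ds-\langle\overline p,\tilde y(T)\rangle+\langle\overline p,x-\overline y\rangle+g(y(T)),
\]
and the variational inequality makes the middle integral nonnegative. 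Applied to the optimal pair $(u_T,y_T)$ and combined with the upper bound this yields
\[
\tfrac12\int_0^T\bigl[\|u_T-\overline u\|^2+\|C(y_T-\overline y)\|^2\bigr]ds+g(y_T(T))-\langle\overline p,y_T(T)-\overline y\rangle\le K_2.
\]

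\textbf{Closing the absorption.} The main obstacle is that $g$ is only locally Lipschitz and bounded below, so the terminal term $\langle\overline p,y_T(T)-\overline y\rangle$ has no a priori bound. I would exploit detectability of $(A,C)$: pick $L$ with $A-LC$ Hurwitz, rewrite the state equation as $\dot{\tilde y}_T=(A-LC)\tilde y_T+L(C\tilde y_T)+B\tilde u_T$, and apply Duhamel together with Cauchy--Schwarz to obtain
\[
\|y_T(T)-\overline y\|\le Me^{-\alpha T}\|x-\overline y\|+C_1\bigl(\|C(y_T-\overline y)\|_{L^2(0,T)}+\|u_T-\overline u\|_{L^2(0,T)}\bigr).
\]
Using $g\ge -M_g$ and absorbing the resulting linear term via $ab\le\tfrac14 a^2+b^2$ closes the estimate and yields $\|u_T-\overline u\|_{L^2}^2+\|C(y_T-\overline y)\|_{L^2}^2\le K$. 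The same Duhamel representation, convolved as $L^1\ast L^2\subset L^2$, then upgrades the $C$-weighted bound to $\|y_T-\overline y\|_{L^2}\le K'$, proving \eqref{lemma_unif_boun_2_intro}.

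\textbf{Pointwise turnpike.} From $\int_0^T\|y_T-\overline y\|^2\,ds\le K$ and Chebyshev's inequality, the set $\{t:\|y_T(t)-\overline y\|\ge\delta\}$ has measure at most $K/\delta^2$, so for $T$ large one locates times $t_1\in[0,\tau_0]$ and $t_2\in[T-\tau_0,T]$ with $\|y_T(t_i)-\overline y\|$ arbitrarily small. By Bellman's principle the restriction of $(u_T,y_T)$ to $[t_1,t_2]$ is optimal for its own boundary data, so rerunning the integral bound from the small initial perturbation $\|y_T(t_1)-\overline y\|$, combined with the exponential contraction in the observer form, promotes the $L^2$-in-time closeness to the pointwise estimate $\|y_T(t)-\overline y\|<\varepsilon$ on $[\tau,T-\tau]$, with $\tau=\tau(A,B,C,U,x,z,g,\varepsilon)$. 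The most delicate step is the absorption in the third paragraph, which requires combining steady KKT geometry, detectability and stabilizability to compensate for the lack of any growth assumption on $g$.
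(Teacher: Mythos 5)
Your Steps 1--3 reproduce, correctly, the paper's preparatory machinery: the upper bound $V(x,T)\le V_sT+K$ via $U$-stabilizability, the exact expansion of $J_{T,x}$ around $(\overline u,\overline y)$ with the steady variational inequality making the cross term nonnegative (this is Lemma \ref{lemma_rappr_lower_bound}), and the detectability-based absorption giving \eqref{lemma_unif_boun_2_intro} (your observer form $A-LC$ with Duhamel is an equivalent, arguably cleaner, substitute for the paper's stable/antistable decomposition in Lemma \ref{lemma_obs_conseq_dynamical}).

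The genuine gap is in your final paragraph, which is the heart of the theorem. To promote the $L^2$ bound to the pointwise estimate \eqref{epsturnpike} via your observer-form contraction on $[t_1,t_2]$, you need the running cost $\Lambda:=\tfrac12\int_{t_1}^{t_2}\bigl[\|u_{_T}-\overline u\|^2+\|C(y_{_T}-\overline y)\|^2\bigr]ds$ to be \emph{small} (of order $\|y_{_T}(t_1)-\overline y\|+\|y_{_T}(t_2)-\overline y\|$), not merely bounded. ``Rerunning the integral bound from the small initial perturbation'' does not deliver this: the constant $K$ in \eqref{lemma_unif_boun_2_intro} contains order-one contributions from $g$ and from $\langle\overline p,\,x-y_{_T}(T)\rangle$ that do not vanish as the initial datum approaches $\overline y$, so the rerun only reproduces $\Lambda\le K$. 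Moreover, ``optimal for its own boundary data'' suggests the fixed-endpoint subproblem on $[t_1,t_2]$, but constructing a competitor that actually reaches $y_{_T}(t_2)$ requires exact controllability, which is not assumed ($U$-stabilizability only gives asymptotic approach). The paper closes this by comparing $u_{_T}$ with a spliced control $\hat u$ that equals $u_{_T}$ on $[0,t_1]\cup[t_2,T]$ and equals the stabilizing control on $[t_1,t_2]$; the contributions outside $[t_1,t_2]$ then differ only through the state, and the terminal-cost discrepancy $|g(\hat y(T))-g(y_{_T}(T))|$ is controlled by Gronwall on the short interval $[t_2,T]$ (length $\le\zeta$), the \emph{local} Lipschitz continuity of $g$, and the uniform sup bound $\|y_{_T}(t)\|\le K$ from Lemma \ref{lemma_unif_bound} -- yielding $\Lambda\le K\eta$. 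An alternative fix is to invoke the $T$-uniform Lipschitz estimate for $V(\cdot,T-t_2)$ (Lemma \ref{lemma V lip estimate}) as the effective terminal cost of the subproblem; either way, an ingredient of this kind is missing from your argument, and without it the pointwise turnpike estimate does not follow.
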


We note that existence and uniqueness of a minimizer $(\overline{u},\overline{y})\in M_s$ for \eqref{eq: steady_functional}, and then, for the turnpike,
follows from the detectability of $(A,C)$ (see the Remark \ref{rmk: steady system - appendix} in Appendix \ref{sec:Proof of the turnpike property}).  
The uniqueness of the turnpike may fail,  for instance,  if the constraint set is nonconvex \cite{pighin2020nonuniqueness}, and also if the constraint set is convex but the pair $(A,C)$ is not detectable \cite{pighin2020turnpike2}.

\subsection{Main result}
Our main result establishes the connection between the turnpike property 
and the long-time behavior of the value function \eqref{eq: value function}. 
The latter is closely related to the value function associated to the corresponding infinite-horizon optimal control problem, that we define as
\begin{equation}\label{eq: W def}
        W(x) := \inf_{u\in \mathscr{A}_x} \ J_{\infty,x}(u) = \ \int_{0}^{\infty}\left[\dfrac{1}{2}\|u(s)\|^{2} + \dfrac{1}{2}\|C\, y(s) - z\|^{2} - V_s \right]\;ds,
    \end{equation}
    where, for each $u\in L^2_{loc}(0,+\infty;U)$, the function $y\in C([0,+\infty);\R^n)$ is the solution to \eqref{eq: linear ODE} in the time-interval $(0,+\infty)$, with initial condition $x$ and control $u$.
    Here, the set of admissible controls is 
    \begin{equation}
    \label{eq: admissible controls}
        \mathscr{A}_x\coloneqq \left\{u\in L^2_{\mbox{\tiny{loc}}}(0,+\infty;U) \ : \ \int_0^{\infty}\left|\dfrac{1}{2}\|u(s)\|^{2} + \dfrac{1}{2}\|C\, y(s) - z\|^{2} - V_s\right|ds < +\infty\right\}.
    \end{equation}

\begin{theorem}\label{th_turnpikeHJB}
    Assume $(A,B)$ is $U$-stabilizable, $(A,C)$ is detectable and $A$ is invertible. Let $z\in \R^n$ be given, and let $g:\R^n\to\R$ be a given locally Lipschitz function bounded from below. Let $V$ and $W$ be the value functions defined in \eqref{eq: value function} and \eqref{eq: W def} respectively.
    Then, the following statements hold true:
    \begin{enumerate}[label = (\roman*)]
        \item For any bounded set $\Omega\subset \mathbb{R}^n$, we have
        \begin{equation*}
        V(x,T) - V_{s}\, T \longrightarrow W(x) + \lambda, \quad \text{as} \ T\to \infty,\quad \mbox{uniformly in}\ x\in\Omega
        \end{equation*}
        where $V_{s}$ is the constant defined in \eqref{V_s def} 
        and the constant $\lambda$ is given by
        \begin{equation}\label{lambda in main thm}
            \lambda = \lim\limits_{T\to +\infty} V(\bar{y},T)-V_sT,
        \end{equation}
        where $\bar{y}$ is the state in the unique pair $(\bar{u},\bar{y})\in M_s$ minimizing \eqref{eq: steady_functional}.
        
        \item Moreover, $W(\cdot)$ is, up to an additive constant, the unique viscosity solution bounded from below to the stationary problem
        \begin{equation}\label{eq: ergodic HJB_thm}
            V_{s} + \max\limits_{u\in U} \left\{ -\nabla W(x)\cdot(Ax+Bu) - \frac{1}{2}\|u\|^{2} \right\} = \frac{1}{2}\|Cx-z\|^{2}
        \quad \quad x\in \mathbb{R}^{n}.
        \end{equation}
        In addition, the equation \eqref{eq: ergodic HJB_thm} with a different constant $c\neq V_s$ does not admit any viscosity solution bounded from below.
    \end{enumerate}
    
\end{theorem}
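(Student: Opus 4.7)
Write $\ell(y,u) := \tfrac{1}{2}(\|u\|^{2} + \|Cy - z\|^{2})$ for the running cost and set $\widetilde{V}(x,T) := V(x,T) - V_s T$, $\mu(T) := \widetilde V(\overline y, T)$. My plan is to derive \Cref{th_turnpikeHJB} from the dynamic programming principle (DPP) for $V$ combined with the turnpike estimates \eqref{epsturnpike}--\eqref{lemma_unif_boun_2_intro} from \Cref{th_TURNPIKE}. The first step is to show that the limit \eqref{lambda in main thm} defining $\lambda$ exists. I would prove that $\mu$ is non-increasing via the following extension argument: for $0 < T_1 < T_2$ and a near-optimal control $u^*$ for $V(\overline y, T_1)$, the concatenation of $\overline u$ on $[0, T_2 - T_1]$ with the shifted copy of $u^*$ on $[T_2 - T_1, T_2]$ keeps the state equal to $\overline y$ on the initial interval and yields $V(\overline y, T_2) \leq V_s(T_2 - T_1) + V(\overline y, T_1)$, i.e.\ $\mu(T_2) \leq \mu(T_1)$. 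A lower bound on $\mu$ follows from the a priori bound \eqref{lemma_unif_boun_2_intro} at $x = \overline y$ together with the lower boundedness of $g$, giving $\mu(T) \geq -K$ uniformly in $T$.

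For part (i), I would prove matching asymptotic upper and lower bounds for $\widetilde V(x,T)$. For the upper bound, given $\varepsilon > 0$ pick $T_0$ large and a control $u^*\in \mathscr{A}_x$ that is $\varepsilon$-close to optimal for $W(x)$, with trajectory $y^*$; by admissibility in $\mathscr{A}_x$ combined with a turnpike-type analysis of the infinite-horizon problem, $y^*(T_0) \to \overline y$ as $T_0 \to \infty$. DPP then gives
\begin{equation*}
V(x,T) \;\leq\; \int_0^{T_0} \ell(y^*(s), u^*(s))\, ds + V(y^*(T_0), T - T_0).
\end{equation*}
The first term is bounded by $W(x) + V_s T_0 + \varepsilon$, while the second is close to $V(\overline y, T - T_0)$ by locally uniform Lipschitz continuity of $V(\cdot, T)$. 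Sending first $T \to \infty$ and then $T_0 \to \infty$ yields $\limsup_T \widetilde V(x,T) \leq W(x) + \lambda$. For the matching lower bound I would apply DPP to the optimal pair $(u_T, y_T)$ at a time $\tau$ large enough that \eqref{epsturnpike} ensures $\|y_T(\tau) - \overline y\| < \varepsilon$. Using \Cref{definition_U_stab}, I extend $u_T|_{[0,\tau]}$ beyond $\tau$ by a $U$-stabilizing control from $y_T(\tau)$ to $\overline y$, producing an element of $\mathscr{A}_x$ whose tail cost above $V_s$ is controlled by $C\varepsilon$. This yields $\int_0^\tau [\ell(y_T, u_T) - V_s]\, ds \geq W(x) - C\varepsilon$; combining with DPP and the continuity bound for $V(y_T(\tau), T-\tau) - V(\overline y, T-\tau)$ and passing to the limit delivers $\liminf_T \widetilde V(x,T) \geq W(x) + \lambda$. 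Uniformity in $x\in\Omega$ comes from the quantitative, locally uniform character of all the estimates involved.

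For part (ii), I would verify that $W$ is a viscosity solution to \eqref{eq: ergodic HJB_thm} with $c = V_s$ via the standard route: the DPP for the infinite-horizon problem gives $W(x) = \inf_u \{\int_0^t [\ell(y,u) - V_s]\, ds + W(y(t))\}$, and the sub- and supersolution inequalities follow from test-function arguments and a measurable selection from $U$; boundedness from below of $W$ comes from the a priori estimate \eqref{lemma_unif_boun_2_intro}. For the non-existence statement, suppose $\tilde W$ is a bounded-from-below viscosity solution for some $c \ne V_s$. Then $(x,t) \mapsto \tilde W(x) + c t$ is a viscosity (sub/super)solution of the evolutionary HJB, and a comparison with $V(\cdot,\cdot)$ (using that $g$ is locally Lipschitz and bounded below) forces $V(x,T)/T \to c$; but the same argument applied to $W$ combined with part (i) gives $V(x,T)/T \to V_s$, contradicting $c \neq V_s$. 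Finally, uniqueness up to an additive constant when $c = V_s$ is a direct consequence of part (i): any such $\tilde W$ yields $\lim_T \widetilde V(x,T) = \tilde W(x) + \tilde \lambda$ with the same argument, and comparing with $W(x) + \lambda$ shows $\tilde W - W$ is constant.

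The main obstacle I expect is establishing the locally uniform (in $T$) Lipschitz dependence of $V(\cdot, T)$ on the initial datum, which enters both the upper and lower bound arguments via the replacements $V(y^*(T_0), T - T_0) \leftrightarrow V(\overline y, T - T_0)$ and $V(y_T(\tau), T - \tau) \leftrightarrow V(\overline y, T - \tau)$. The subtlety is that $\overline u$ is allowed to saturate the boundary of $U$, so the natural perturbation argument must be performed within the set of admissible controls, which is precisely what \Cref{definition_U_stab} is designed to handle: the constant $K(A,B,U)$ quantifies the $L^1 \cap L^2$ cost of connecting two trajectories with admissible controls, and combined with \eqref{lemma_unif_boun_2_intro} it produces the required uniform-in-$T$ continuity modulus.
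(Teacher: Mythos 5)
Your proposal is correct and follows essentially the same route as the paper: the limit defining $\lambda$ via monotonicity of $T\mapsto V(\bar y,T)-V_sT$, the DPP splitting combined with the turnpike property and the uniform-in-$T$ Lipschitz estimates for $V(\cdot,T)$ and $W$ for part (i), and, for part (ii), the identification of $\tilde W(x)+cT$ as the value function of the Cauchy problem with terminal cost $\tilde W$ followed by an application of part (i) to force $c=V_s$ and $\tilde W-W$ constant. The only cosmetic differences are that the paper splits at $T/2$ and absorbs your explicit stabilization step into the DPP inequality for $W$, whereas you split at a fixed $\tau$ and make the admissible extension explicit.
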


As it is well-known, the value function $V(x,T)$ defined in \eqref{eq: value function} is the unique viscosity solution to the following Cauchy problem\footnote{Note that, as defined in \eqref{eq: value function}, the value function $V$ depends on the initial condition $x$ and the time horizon $T$. We use the notation $\nabla V$ for the derivative of $V$ with respect to $x$, and $\partial_{T}V$ for its derivative with respect to $T$. These derivatives must be interpreted in the appropriate classical or viscosity sense depending on the situation, which will be specified in each situation.} 
\begin{equation}\label{eq: HJ_mainth}
\left\{
\begin{array}{l}
\partial_{T} V + \max\limits_{u\in U}\left\{ -\nabla V \cdot (Ax+Bu) - \frac{1}{2}\|u\|^{2} \right\} = \frac{1}{2}\|Cx-z\|^{2} \\
            \noalign{\vskip 2mm}
V(x,0) = g(x). 
\end{array}\right.
\end{equation}
A proof can be found in \cite[\S 5, Thm.5 and Thm.6]{kouhkouh2018dynamic} (see also \cite[Thm. 7.2.4]{cannarsa2004semiconcave}). It relies on the methods
in \cite[Section III.3]{bardi2008optimal} (see also \cite[Section 10.3]{evans2010partial}) and is based on the Dynamic Programming Principle. 

Our main result in Theorem \ref{th_turnpikeHJB} then describes the long-time behavior of the solution to this problem. 
Note that in the case where the final cost $g$ is a nonconvex function, even if it is very smooth, the solution to \eqref{eq: HJ_mainth} eventually loses regularity for $T$ sufficiently large (see Example \ref{lemma_nouniq}), and the equation must be interpreted in the viscosity sense \cite{crandall1992user,crandall1983viscosity,lions1982generalized} (see also \cite{bardi2008optimal,evans2010partial}).

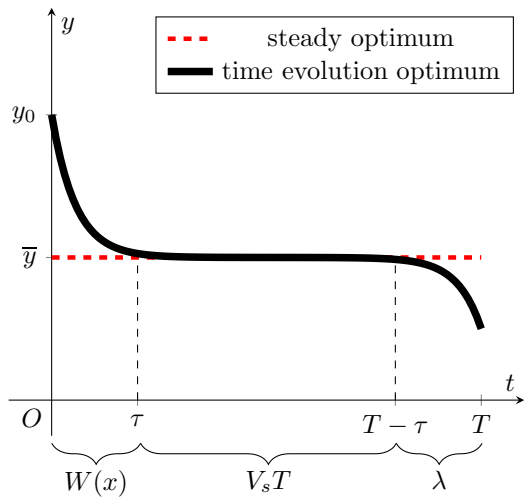
\begin{figure}
	\begin{center}
		\begin{tikzpicture}
		\begin{axis}[
		axis x line=middle, axis y line=middle,
		ymin=-1, ymax=11, ytick={4, 8}, yticklabels={$\overline{y}$, $y_0$}, ylabel=$y$,
		xmin=-1, xmax=11, xtick={2,8,10}, xticklabels={$\tau$,$T-\tau$,$T$}, xlabel=$t$,
		domain=0:10,samples=101, 
		]
		
		{\addplot [dashed, red,line width=0.06cm] {4};}
		\addlegendentry{steady optimum}
		\draw [dashed] (30,10) -- (30,50);
		\draw [dashed] (90,10) -- (90,50);
		{\addplot [black,line width=0.1cm] {4*exp(-1.8*\x)-2*exp(-1.8*(10-\x))+4};}
		\addlegendentry{time evolution optimum}
		\node [below left, black] at (10,10) {$O$};
		\end{axis}
		\draw [decorate,decoration={brace,amplitude=8pt,mirror,raise=0pt},yshift=0pt]
		(0.57,-0.1) -- (1.75,-0.1) node [black,below,midway,yshift=-0.24cm] {$W(x)$};
		\draw [decorate,decoration={brace,amplitude=8pt,mirror,raise=0pt},yshift=0pt]
		(1.75,-0.1) -- (5.15,-0.1) node [black,below,midway,yshift=-0.24cm] {$V_s T$};
		\draw [decorate,decoration={brace,amplitude=8pt,mirror,raise=0pt},yshift=0pt]
		(5.15,-0.1) -- (6.29,-0.1) node [black,below,midway,yshift=-0.24cm] {$\lambda$};
		\end{tikzpicture}
		\caption{Optimal state fulfilling the turnpike property and associated asymptotic decomposition of the value function.}
		\label{turnpike_asymptotic decomposition of the value function}
	\end{center}
\end{figure}

\begin{remark}\label{Rmk main thm}
In view of Theorem \ref{th_turnpikeHJB}, the value function $V(x,T)$ admits the asymptotic decomposition
$$
V(x,T) \sim  W(x) + V_s\, T + \lambda \qquad \text{as} \quad T\sim \infty,
$$
 where each term can be identified with one of the three stages in the turnpike optimal trajectory (see Figure \ref{turnpike_asymptotic decomposition of the value function}). 
 
\begin{enumerate}
    \item[1.] The term $W(x)$ represents the cost of stabilizing the trajectory from the initial state $x$ to the turnpike. Indeed, 
    in a large time interval, optimal strategies will spend most of the time close to the turnpike where, in view of  \eqref{eq: W def}, the running cost of the infinite-horizon problem equals zero.
    \item[2.] The term $V_s\, T$ corresponds to the running cost accumulated in the intermediate arc, where the time-evolution optima are close to the steady ones.
    \item[3.] The constant $\lambda$ represents the cost of leaving the turnpike
    in order to minimize the final cost $g$. 
    This final arc does not appear in the infinite  horizon problem, but
    it is always present in the finite horizon one, no matter how long the time-horizon is.
    Therefore, it has to be considered in the long-time decomposition of the value function.
    The way to single out this final arc from the rest of the trajectory is to consider the finite time horizon problem  taking $\overline{y}$ as initial state, so that the cost of reaching the turnpike is $0$ and then to subtract the cost during the transient arc $V_s\, T$ (see the definition of $\lambda$ in \eqref{lambda in main thm}).\\
    Let us also mention that such a constant $\lambda$ is usually refered to as \textit{the ergodic constant} (see \S \ref{sec: survey HJB}) which, roughly speaking, insures the existence of a set which attracts all controlled trajectories (see \cite{arisawa1997ergodic,arisawa1998ergodic}). And the \textit{turnpike} plays the role of such set where all the trajectories get as close as we wish regardless of their initial position. Moreover, the value of $\lambda$ represents the \textit{averaged cost} of being around the \textit{turnpike} (after omitting the first and second arcs). 
\end{enumerate}
\end{remark}

In case the control constraints are not imposed (i.e. $U=\mathbb{R}^m$), one can actually follow similar arguments as in \cite{porretta2013long} to prove exponential turnpike for the optimal control problem \eqref{eq: linear ODE}-\eqref{eq: functional}, under the assumptions of $(A,B)$ being stabilizable and $(A,C)$ detectable. 
We recall that exponential turnpike stands for the existence of constants $K,\mu>0$, independent of $T$, such that any optimal control-state trajectory $(y_T,u_T)$ satisfies
$$
    \|u^T(t)-\overline{u}\|+\|y^T(t)-\overline{y}\|\leq K\left[e^{-\mu t}+e^{-\mu \left(T-t\right)}\right] \quad \forall t\in [0,T].
$$ 
This allows us to deduce the same conclusions of Theorem \ref{th_turnpikeHJB} for the unconstrained case without assuming that $A$ is invertible.

 We note that in this unconstrained case $U=\mathbb{R}^m$, the value function $W$ for the infinite horizon problem, defined in \eqref{eq: W def}, is the viscosity solution to the Hamilton-Jacobi-Bellman equation
\begin{equation}\label{eq: ergodic HJB_thm unconstrained}
            V_s + \dfrac{1}{2}\|B^{*}\nabla W(x)\|^{2} - A\, x\cdot \nabla W(x) = \dfrac{1}{2}\|C\,x - z\|^{2}
        \quad \quad x\in \mathbb{R}^{n},
\end{equation}
and the solution can be given as a quadratic form using Riccati Theory.
We recall this result in the following proposition, and give a sketch of the proof in Appendix B for the sake of completeness.

\begin{proposition}\label{prop_rappr_riccati}
    Let $\left(\overline{u},\overline{y}\right)$ be the minimizer for $J_s$ defined in \eqref{eq: steady_functional}. Then,
    \begin{equation}\label{feedback}
        F(y)\coloneqq -B^*\widehat{E}\left(y-\overline{y}\right)+\overline{u}
    \end{equation}
    defines an optimal feedback law for $J_{\infty,x}$ defined in the right hand side of \eqref{eq: W def}, meaning that, for any $x\in\R^n$, the unique optimal control is given by
    \begin{equation}\label{opt_inf_1}
        u^{*}(s)= -B^*\widehat{E}\left(y^{*}(s)-\overline{y}\right)+ \overline{u},\qquad s\in (0,T)
    \end{equation}
    where $\widehat{E}$ is the unique symmetric positive semidefinite solution to the Algebraic Riccati Equation
    \begin{equation*}\label{ARE_3}
		-\widehat{E}A-A^*\widehat{E}+\widehat{E}BB^*\widehat{E}=C^*C \hspace{1 cm} \mbox{(ARE)}
		\end{equation*}
    and $y^{*}$ solves the closed loop equation
    \begin{equation*}\label{}
    \left\{ \begin{array}{ll}
    \frac{d}{ds}y^{*}(s) = \left(A\, + B\,F\right) y^{*}(s), &  s\in (0,\infty) \\
    y^{*}(0) = x.
    \end{array}\right.
    \end{equation*}
    Moreover, the value function $W$ defined in \eqref{eq: W def} is given by
    \begin{equation*}\label{infinity_value_function}
        W(x) =\dfrac{1}{2}\left(x-\overline{y}\right)^*\widehat{E}\left(x-\overline{y}\right)+(\overline{p},x-\overline{y})_{\mathbb{R}^n},
    \end{equation*}
    and is, up to an additive constant, the unique viscosity solution, bounded from below, to the equation \eqref{eq: ergodic HJB_thm}. 
\end{proposition}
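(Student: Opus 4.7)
The plan is to reduce $J_{\infty,x}$ to a standard unconstrained LQR problem centered at the steady optimum, invoke classical Riccati theory for that reduced problem, and then verify the HJB equation by direct substitution. First, set $\tilde{y}=y-\bar{y}$ and $\tilde{u}=u-\bar{u}$. Since $A\bar{y}+B\bar{u}=0$, the dynamics reduce to $\dot{\tilde{y}}=A\tilde{y}+B\tilde{u}$, $\tilde{y}(0)=x-\bar{y}$, and the integrand of $J_{\infty,x}$ expands as
\begin{equation*}
\frac{1}{2}\|u\|^{2}+\frac{1}{2}\|Cy-z\|^{2}-V_{s}=\frac{1}{2}\|\tilde{u}\|^{2}+\frac{1}{2}\|C\tilde{y}\|^{2}+\bar{u}\cdot\tilde{u}+(C\bar{y}-z)\cdot C\tilde{y}.
\end{equation*}
Because $U=\R^{m}$, invertibility of $A$ reduces the steady constraint $Ay+Bu=0$ to a parametrization by $u$, and the Euler--Lagrange conditions for the steady minimization yield a Lagrange multiplier $\bar{p}\in\R^{n}$ with $\bar{u}=B^{*}\bar{p}$ and $A^{*}\bar{p}=C^{*}(C\bar{y}-z)$. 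Substituting these, the cross terms collapse to $(A\tilde{y}+B\tilde{u})\cdot\bar{p}=\dot{\tilde{y}}\cdot\bar{p}=\frac{d}{ds}(\bar{p}\cdot\tilde{y})$. Integrability of the quadratic part together with detectability of $(A,C)$ force $\tilde{y}(s)\to 0$ along any $u\in\mathscr{A}_{x}$, so that
\begin{equation*}
J_{\infty,x}(u)=\int_{0}^{\infty}\!\Bigl[\tfrac{1}{2}\|\tilde{u}\|^{2}+\tfrac{1}{2}\|C\tilde{y}\|^{2}\Bigr]ds-\bar{p}\cdot(x-\bar{y}).
\end{equation*}

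Next I would apply classical LQR theory to the resulting purely quadratic problem. Under stabilizability of $(A,B)$ (which follows from $U$-stabilizability with $U=\R^{m}$) and detectability of $(A,C)$, the Algebraic Riccati Equation admits a unique symmetric positive semidefinite solution $\widehat{E}$, and the closed-loop matrix $A-BB^{*}\widehat{E}$ is Hurwitz. The optimal value of the reduced integral is $\frac{1}{2}(x-\bar{y})^{*}\widehat{E}(x-\bar{y})$, attained by the feedback $\tilde{u}^{*}=-B^{*}\widehat{E}\tilde{y}^{*}$. Undoing the shift yields the feedback law \eqref{feedback}, the closed-loop representation \eqref{opt_inf_1}, and
\begin{equation*}
W(x)=\frac{1}{2}(x-\bar{y})^{*}\widehat{E}(x-\bar{y})+(\bar{p},x-\bar{y})_{\R^{n}},
\end{equation*}
after adjusting the sign convention for $\bar{p}$ to match the statement.

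For the HJB part, compute $\nabla W(x)=\widehat{E}(x-\bar{y})+\bar{p}$, and since $U=\R^{m}$, note that the maximum in \eqref{eq: ergodic HJB_thm} is attained at $u=B^{*}\nabla W(x)$, reducing the equation to \eqref{eq: ergodic HJB_thm unconstrained}. Substitution of the quadratic ansatz makes the quadratic-in-$x$ part vanish by virtue of the ARE, the linear-in-$x$ part vanish using $A^{*}\bar{p}=C^{*}(C\bar{y}-z)$ and $\bar{u}=B^{*}\bar{p}$, and the constants match via $V_{s}=\frac{1}{2}\|\bar{u}\|^{2}+\frac{1}{2}\|C\bar{y}-z\|^{2}$. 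Uniqueness up to additive constants of the viscosity solution bounded from below is inherited from part (ii) of Theorem \ref{th_turnpikeHJB} specialized to $U=\R^{m}$.

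The main obstacle is the coordinated algebraic bookkeeping in the verification step, where the ARE and the steady-state optimality conditions must be used in concert to see the HJB identity fall out cleanly; a secondary subtlety is justifying the decay $\tilde{y}(s)\to 0$ along admissible trajectories in $\mathscr{A}_{x}$, which leans on detectability of $(A,C)$ in the same spirit as in the proof of \Cref{th_TURNPIKE}.
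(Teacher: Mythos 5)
Your proof is correct and follows essentially the same route as the paper's: complete the square in $J_{\infty,x}$ around $(\overline{u},\overline{y})$ using the steady adjoint $\overline{p}$ so that the cross terms telescope into the boundary term $(\overline{p},x-\overline{y})$ (justified by the decay $y(s)\to\overline{y}$ from \Cref{lemma_rappr_infinity 1}), reduce to a homogeneous infinite-horizon LQR, and invoke classical Riccati theory for the feedback law and the quadratic value, with uniqueness for the ergodic equation inherited from \Cref{th_turnpikeHJB}(ii). The only quibble is a sign slip: the maximum in \eqref{eq: ergodic HJB_thm} over $u\in\mathbb{R}^m$ is attained at $u=-B^{*}\nabla W(x)$, not $+B^{*}\nabla W(x)$, which is in fact consistent with the reduced equation \eqref{eq: ergodic HJB_thm unconstrained} that you then verify.
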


\subsection{Related literature}\label{subsec:related results}
 In the definition of the value function $W$ in \eqref{eq: W def} associated to the infinite horizon problem, the normalisation of the running cost by subtracting the cost at the optimal steady state is indeed very often used in dissipativity-based approaches to turnpike properties, and in the analysis of receding-horizon optimal control. In this context, the normalised running cost is known as the \textit{supply rate}, see \cite[Thm.2]{angeli2011average}. 

In this direction, let us mention the seminal work of Willems \cite{willems1971least}, where existence results for infinite horizon optimal control problems are proved under frequency-domain and time-domain conditions. Moreover, a full characterisation of the solutions to the algebraic Riccati equation is also provided. We also refer to the books \cite{anderson2007optimal,kwakernaak1972linear} for the classical Riccati theory.

The relation between the value function and the turnpike property has been investigated by Gr\"une in \cite{grune2016approximation} for the discrete-time setting in the context of receding horizon control (see also \cite{grune2018turnpike,grune2016relation}). However. their motivations and also their conclusions are different to ours.  There, the author presents an iterative method based on the turnpike property, which is used to approximate the infinite horizon problem by a sequence of finite horizon ones. The Assumption 4.1 in \cite{grune2016approximation} represents, in fact, a discrete-time version of the turnpike property (see also Chapter 6 and 8 in \cite{grune2017nonlinear} for further details).
Other related results in connection with Model Predictive Control can be found in \cite{zanon2016tracking, zanon2018economic} and the references therein. 

Other recent works related to our study are \cite{faulwasser2020continuous} and \cite{breiten2020turnpike}. In \cite{faulwasser2020continuous}, the interplay between dissipativity and stability properties in continuous-time infinite horizon optimal control problems have been studied, and moreover, the question on the link between the latter problem and the associated HJB equation has also been raised. In \cite{breiten2020turnpike},  the authors analyse the receding horizon control problem with finite stages in infinite-dimensions  in order to study the corresponding infinite horizon problem. In particular, their Theorem 6.4 provides an error estimate between these two problems which is intimately related to the turnpike property. A recent survey of dissipativity methods in optimal control can be found in \cite{grune2021dissipativity}.

A similar result to Theorem \ref{th_turnpikeHJB}(i) also appears in \cite{kouhkouh2018dynamic}, where the terms $\lambda, V_{s}$ and $W(\cdot)$  are represented by a different approach: a Riccati operator in an augmented state space is introduced, taking into account the target $z$ as a state variable (with zero dynamics). A description of the asymptotic behavior of such Riccati operators is provided in \cite[Lemma 2]{kouhkouh2018dynamic}, which then is the key ingredient to determine the desired asymptotic behavior.

In the present manuscript, our analysis relies on the study of the value functions of both the finite-time and infinite-time horizon problems and their corresponding partial differential equations, together with the optimality conditions they satisfy. The latter tools (in particular the PDE characterization) are different from those usually encountered in Riccati theory and in dissipativity-based approaches, and help us to establish the link between the turnpike property as known in control theory with the asymptotic behavior of a certain class of PDEs, that is, a time-evolutive HJB equation and its corresponding ergodic version.

\subsection{Known results on long-time behavior for Hamilton-Jacobi equations}\label{sec: survey HJB}

Observe that the PDE in \eqref{eq: HJ_mainth} is a Hamilton-Jacobi equation of the form
\begin{equation}\label{eq: HJ general}
    \partial_{T}V + H(x,\nabla V) = \ell(x),\quad \text{in} \  \R^n\times (0,+\infty),
\end{equation}
where, in our case, the function $H: \R^n\times \R^n \to \R$, known as the \emph{Hamiltonian}, and the function $\ell :\R^n \to \R$ are given by
\begin{equation}
\label{intro - ham}
\begin{aligned}
    H(x,p)   = \; \max\limits_{u\in U}\left\{ -p\cdot (Ax+Bu) - \dfrac{1}{2}\|u\|^{2}\right\}\quad\text{and }\quad
    \ell(x)  =  \dfrac{1}{2}\|Cx-z\|^{2}.
\end{aligned}
\end{equation}

The long-time behavior for equations like \eqref{eq: HJ general}
has been widely studied in the literature,  especially in the flat torus, but also in more general settings,
e.g. \cite{barles2019large,barles2000large,fujita2006asymptotic,ishii2006asymptotic,ishii2008asymptotic,ishii2013short,roquejoffre2001convergence} and the references therein. 
Here, we deal with unbounded solutions in the whole space $\R^n$,
a scenario much less studied compared to the  case in the $n$-dimensional torus.
In the recent work \cite{barles2019large} it is proved, under suitable hypotheses on $H$, the existence of a constant $c\in\R$ such that
\begin{equation}\label{eq: long-time intro}
V(x,T) - c\, T \to \varphi (x), \qquad \text{as} \ T\to \infty,
\end{equation}
where $\varphi$ is called a \textit{corrector}, and is a viscosity solution to the \emph{stationary} Hamilton-Jacobi equation
\begin{equation}
    \label{eq: stationary equation}
    c + H(x, \nabla \varphi ) = \ell(x) , \qquad \text{in} \ \R^n.
\end{equation}
This is also called the \emph{ergodic problem} \cite{arisawa1997ergodic,arisawa1998ergodic,barles2006ergodic} and $c$ is known as the \textit{ergodic constant}.
For equations like \eqref{eq: stationary equation}, a solution is understood as a pair $(c,\varphi)$, where $c$ is a constant and $\varphi$ is a (continuous) viscosity solution to \eqref{eq: stationary equation}.

In Theorem \ref{th_turnpikeHJB}, we have obtained long-time asymptotics of the form \eqref{eq: long-time intro} for the solution to the HJB equation \eqref{eq: HJ_mainth}.
Although in \cite{barles2019large}, the unbounded case (space domain $\Omega =\R^n$) is treated, we point out that in our setting, the Hamiltonian does not satisfy all the assumptions required in \cite{barles2019large}. In particular, our function $H(x,p)$ defined in \eqref{intro - ham} is neither strictly convex in the $p$ variable nor coercive (in view of \eqref{intro - ham}, the Hamiltonian is coercive if and only if $B^*$ has a trivial kernel).

The paper is structured as follows. 
In \textbf{subsection \ref{sec:CTLT Hamilton-Jacobi}}, we prove a first result which is a direct consequence of the turnpike property, namely, the time-averages of the value function converge to the ergodic constant as the time horizon tends to infinity. In  \textbf{subsection \ref{subsection infinte time horizon pbm}}, we study the auxiliary infinite horizon optimal control problem introduced in \eqref{eq: W def}. Finally, in \textbf{subsection \ref{subsection: proof Thm 1.1 (1)}}, we give the proof of Theorem \ref{th_turnpikeHJB}.
In \textbf{Section \ref{sec:Conclusions and open problems}}, we sum up the conclusions of the paper and give a list of possible research lines.
Finally, for the reader's convenience and self-consistency of the paper, we include \textbf{Appendix A}, with the proof of the turnpike property stated in Theorem \ref{th_TURNPIKE}, and \textbf{Appendix B}, with some elements taken from classical Riccati Theory, without proof, which are necessary to justify Proposition \ref{prop_rappr_riccati}. 

\section{Infinite horizon problem and proof of Theorem \ref{th_turnpikeHJB}}
\label{sec: Proof main thm}

The proof of Theorem \ref{th_turnpikeHJB} relies on the turnpike property, which ensures that the optimal control for the problem \eqref{eq: value function} and its corresponding state trajectory remain close to the steady optima for any $t$ far away from $0$ and $T$.

\subsection{A first consequence of the turnpike property} 
\label{sec:CTLT Hamilton-Jacobi}
We start with a result which is a direct consequence of the turnpike property in Theorem \ref{th_TURNPIKE}. It ensures that the time-averages of the cost-functional $J_{T,x}(\cdot)$, evaluated in the optimal control $u_{_{T}}$, converge to the value of the steady optimal control problem as $T\to +\infty$.

\begin{proposition}\label{lemma_convaver}
Under the assumptions of Theorem \ref{th_turnpikeHJB}, let $V(x,T)$ be the value function defined in \eqref{eq: value function} and $V_s$ defined as in \eqref{V_s def}. Then, for any $x\in \R^n$, we have
\begin{equation}
\label{eq: limit time average}
    	\frac{1}{T}V (x,T)\underset{T\to +\infty}{\longrightarrow}V_s.
    \end{equation}
\end{proposition}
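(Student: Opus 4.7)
The plan is to prove matching upper and lower bounds on $V(x,T) - V_s\,T$ as $T \to \infty$, both of which are $O(1)$; dividing by $T$ then yields the result.

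For the upper bound, I would use the $U$-stabilizability hypothesis (Definition \ref{definition_U_stab}) applied to the constant trajectory $(\bar u,\bar y)$: this produces an admissible control $u^\dagger$ whose corresponding state $y^\dagger$ satisfies $u^\dagger - \bar u,\, y^\dagger - \bar y \in L^1 \cap L^2(0,\infty)$, with norms bounded by $K\|x - \bar y\|$. Expanding the running cost around the turnpike gives
\[
J_{T,x}(u^\dagger|_{[0,T]}) = V_s T + \tfrac{1}{2}\!\int_0^T\!\bigl(\|u^\dagger - \bar u\|^2 + \|C(y^\dagger - \bar y)\|^2\bigr)ds + \!\int_0^T\!\!\bigl[\bar u\cdot(u^\dagger - \bar u) + (C\bar y-z)\cdot C(y^\dagger - \bar y)\bigr]ds + g(y^\dagger(T)).
\]
The quadratic integral is bounded by the $L^2$-norms, the linear cross terms by Cauchy--Schwarz and the $L^1$-norms, and $g(y^\dagger(T)) \to g(\bar y)$ by local Lipschitz continuity of $g$ once one observes $y^\dagger(t) \to \bar y$ (a standard consequence of $y^\dagger - \bar y \in L^2$ and $\dot y^\dagger - \dot{\bar y} \in L^2$, via the ODE). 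This yields $V(x,T) \leq V_s T + C(x)$.

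For the lower bound, let $(u_T, y_T)$ be optimal and set $\tilde u := u_T - \bar u$, $\tilde y := y_T - \bar y$. Since $(\bar u,\bar y)$ minimizes the convex functional $J_s$ on the convex set $M_s$ (and $A$ is invertible, so the constraint is regular), standard KKT theory produces a Lagrange multiplier $\bar p \in \mathbb{R}^n$ for $A\bar y + B\bar u = 0$ satisfying
\[
A^*\bar p + C^*(C\bar y - z) = 0, \qquad (\bar u + B^*\bar p)\cdot(u - \bar u) \geq 0 \ \ \forall u \in U.
\]
Expanding the instantaneous cost around $(\bar u,\bar y)$ and combining these conditions with the identity
$\int_0^T B^*\bar p \cdot \tilde u\,ds = \bar p\cdot(\tilde y(T) - \tilde y(0)) - \int_0^T A^*\bar p\cdot \tilde y\,ds$, obtained by testing $\dot{\tilde y} = A\tilde y + B\tilde u$ against $\bar p$, the cross terms collapse to $-\bar p\cdot(\tilde y(T) - \tilde y(0))$. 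Together with the nonnegativity of the quadratic remainder, the variational inequality, and $g \geq \inf g$, this gives
\[
V(x,T) - V_s T \;\geq\; -|\bar p|\bigl(\|y_T(T) - \bar y\| + \|x - \bar y\|\bigr) + \inf_{\mathbb{R}^n} g.
\]

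The main obstacle is bounding $\|y_T(T)\|$ uniformly in $T$, since $g$ is only bounded from below and thus exerts no control on the final state. This is precisely where the turnpike property is crucial: for a fixed $\varepsilon>0$, Theorem \ref{th_TURNPIKE} provides some $\tau$ (depending on the data but not on $T$) with $\|y_T(T-\tau) - \bar y\| < \varepsilon$, while the global $L^2$ estimate \eqref{lemma_unif_boun_2_intro} forces $\|u_T - \bar u\|_{L^2(T-\tau, T)} \leq \sqrt{K}$. The variation-of-constants formula applied on $[T-\tau, T]$ then bounds $\|y_T(T)\|$ by a constant independent of $T$. Combining this uniform bound with the upper and lower bounds above, dividing by $T$ and sending $T\to\infty$ yields $V(x,T)/T \to V_s$, as desired.
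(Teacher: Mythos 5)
Your argument is correct and follows essentially the same route as the paper: an $O(1)$ upper bound on $V(x,T)-V_sT$ from $U$-stabilizability to $(\bar u,\bar y)$, and an $O(1)$ lower bound from expanding the running cost around the turnpike and using the first-order optimality condition at $(\bar u,\bar y)$ with the adjoint $\bar p$ (this is exactly Lemma \ref{lemma_rappr_lower_bound}). The only cosmetic difference is how the terminal state is controlled: the paper invokes the uniform bound $\|y_{_T}(t)\|\leq K$ of Lemma \ref{lemma_unif_bound} (obtained from the detectability estimate of Lemma \ref{lemma_obs_conseq_dynamical} combined with the quadratic-versus-linear comparison), whereas you re-derive a terminal bound from the $\varepsilon$-turnpike at $t=T-\tau$, the global $L^2$ estimate \eqref{lemma_unif_boun_2_intro} and Gronwall on $[T-\tau,T]$ --- both are valid and non-circular, since Theorem \ref{th_TURNPIKE} and Lemma \ref{lemma_unif_bound} are proved independently of this proposition.
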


In order to prove the above proposition we need to rewrite the functional $J_{T,x}$ defined in \eqref{eq: functional} in a different way. Roughly speaking, we need the running cost to be centered at the turnpike. This is the content of the following Lemma.

\begin{lemma}\label{lemma_rappr_lower_bound}
Under assumptions of Theorem \ref{th_turnpikeHJB}, let $(\overline{u},\overline{y})$ be the steady optimal control-state pair for the functional $J_s$ defined in \eqref{eq: steady_functional} and $V_s := J_s (\overline{u},\overline{y})$. 
Then, for any $T>0$, $x\in\mathbb{R}^n$ and $u\in \mathcal{U}_T$, we have
    \begin{eqnarray}\label{eq}
    J_{T,x}(u) &=&T\, V_s  +\dfrac{1}{2} \int_0^T \left[\| u(s)-\overline{u}\|^2 + \| C \, \left(y (s)-\overline{y}\right)\|^2\right] ds\nonumber\\
    &\;& +\int_0^T \left[\left(\overline{u},u(s)-\overline{u}\right)_{\mathbb{R}^m}+\left(C\overline{y}-z,\, C \left( y (s)-\overline{y}\right)\right)_{\mathbb{R}^n}\right] ds+g(y(T))
    \end{eqnarray}
    and
    \begin{eqnarray}\label{ineq}
    J_{T,x}(u) &\geq&T\, V_s+\frac{1}{2} \int_0^T \left[\| u(s)-\overline{u}\|^2 + \| C \, \left(y(s)-\overline{y}\right)\|^2\right] ds\\
    &\;&+\left(\overline{p},x-y(T)\right)_{\mathbb{R}^n}+g(y(T)),\nonumber
    \end{eqnarray}
    where $\overline{p}\in \R^n$ is the optimal adjoint steady state (Lagrange multiplier)  and is independent of $T,x$ and $u$.
\end{lemma}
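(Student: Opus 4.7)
The plan is to expand the running cost around the steady optimum $(\overline{u},\overline{y})$ and then use the first-order optimality conditions (KKT) for the steady problem to bound the cross terms. For the identity \eqref{eq}, I write $u(s) = \overline{u} + (u(s)-\overline{u})$ and $Cy(s)-z = (C\overline{y}-z) + C(y(s)-\overline{y})$, and expand each square:
\begin{align*}
\tfrac{1}{2}\|u(s)\|^{2} &= \tfrac{1}{2}\|\overline{u}\|^{2} + (\overline{u}, u(s)-\overline{u})_{\mathbb{R}^{m}} + \tfrac{1}{2}\|u(s)-\overline{u}\|^{2},\\
\tfrac{1}{2}\|Cy(s)-z\|^{2} &= \tfrac{1}{2}\|C\overline{y}-z\|^{2} + (C\overline{y}-z, C(y(s)-\overline{y}))_{\mathbb{R}^{n}} + \tfrac{1}{2}\|C(y(s)-\overline{y})\|^{2}.
\end{align*}
Summing, integrating on $[0,T]$, using $V_{s} = \tfrac{1}{2}(\|\overline{u}\|^{2}+\|C\overline{y}-z\|^{2})$, and adding $g(y(T))$ yields \eqref{eq} directly.

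For the inequality \eqref{ineq} the quadratic terms are already in the right form, so only the cross terms need to be bounded below. Here the key input is a Lagrange multiplier $\overline{p}\in\mathbb{R}^{n}$ for the constrained convex steady problem \eqref{eq: steady_functional}--\eqref{def_M}. The first-order conditions read
\begin{equation*}
A^{*}\overline{p} + C^{*}(C\overline{y}-z) = 0, \qquad (\overline{u}+B^{*}\overline{p},\, v-\overline{u})_{\mathbb{R}^{m}} \geq 0 \quad \forall v\in U,
\end{equation*}
where the first identity is stationarity in $y$ and the second is the variational inequality in $u$ arising from $u\in U$. Applying the variational inequality pointwise with $v=u(s)\in U$ gives
\begin{equation*}
(\overline{u}, u(s)-\overline{u})_{\mathbb{R}^{m}} \geq -(\overline{p}, B(u(s)-\overline{u}))_{\mathbb{R}^{n}},
\end{equation*}
while the stationarity in $y$ combined with transposition gives
\begin{equation*}
(C\overline{y}-z, C(y(s)-\overline{y}))_{\mathbb{R}^{n}} = -(\overline{p}, A(y(s)-\overline{y}))_{\mathbb{R}^{n}}.
\end{equation*}

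Adding these two and integrating on $[0,T]$, the cross-term integral in \eqref{eq} is bounded below by
\begin{equation*}
-\int_{0}^{T}\bigl(\overline{p},\, A(y(s)-\overline{y})+B(u(s)-\overline{u})\bigr)_{\mathbb{R}^{n}}\, ds.
\end{equation*}
Since $\dot{y}(s)=Ay(s)+Bu(s)$ and $A\overline{y}+B\overline{u}=0$, the integrand equals $(\overline{p}, \tfrac{d}{ds}(y(s)-\overline{y}))_{\mathbb{R}^{n}}$, which integrates telescopically to $(\overline{p}, y(T)-x)_{\mathbb{R}^{n}}$. The cross terms are therefore bounded below by $(\overline{p}, x-y(T))_{\mathbb{R}^{n}}$, which combined with \eqref{eq} yields \eqref{ineq}. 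The main (and essentially only nontrivial) point is to justify the existence of the multiplier $\overline{p}$ together with the correct form of the KKT condition at the steady optimum, in particular when $\overline{u}$ saturates $\partial U$; this is standard for a strictly convex quadratic cost minimized over the affine-and-convex set $M_{s}$, and it is the only place where the detectability of $(A,C)$ and the invertibility of $A$ enter (ensuring well-posedness of the steady problem, as recalled in Appendix~A).
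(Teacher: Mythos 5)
Your proposal is correct and follows essentially the same route as the paper: the same quadratic expansion around $(\overline{u},\overline{y})$ for the identity, the same adjoint state $\overline{p}$ defined by $A^{*}\overline{p}+C^{*}(C\overline{y}-z)=0$ (using the invertibility of $A$), the same variational inequality $(\overline{u}+B^{*}\overline{p},v-\overline{u})\geq 0$ for $v\in U$ applied pointwise at $v=u(s)$, and the same integration of $(\overline{p},\tfrac{d}{ds}(y-\overline{y}))$ to produce the boundary term $(\overline{p},x-y(T))$. The only cosmetic difference is that the paper derives the KKT condition explicitly by differentiating $\delta\mapsto J_{s}((\overline{u},\overline{y})+\delta(u-\overline{u},y-\overline{y}))$ along the convex set $M_{s}$, whereas you invoke it as standard.
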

\begin{proof}[Proof of Lemma \ref{lemma_rappr_lower_bound}]
In view of the definition of $J_{T,x}$ in \eqref{eq: functional}, we can compute 
    \begin{eqnarray}\label{eq: value T and J_2}
    J_{T,x}(u)&=& \dfrac{1}{2} \int_0^T \left[\| u(s)-\overline{u}+\overline{u}\|^2 + \| C \, y (s)-C\overline{y}+C\overline{y}-z\|^2\right] ds + g(y(T))\nonumber\\
    &=&\dfrac{T}{2} \left[\| \overline{u}\|^2 + \|  C \, \overline{y}-z\|^2\right] +\dfrac{1}{2} \int_0^T \left[\| u(s)-\overline{u}\|^2 + \| C \, \left(y (s)-\overline{y}\right)\|^2\right] ds\nonumber\\
    &\;&\quad \quad +\int_0^T \left[\left(\overline{u},u(s)-\overline{u}\right)_{\mathbb{R}^m}+\left(C\overline{y}-z,\, C \left( y (s)-\overline{y}\right)\right)_{\mathbb{R}^n}\right] ds+g(y(T))\nonumber\\
    &=& T\, V_s  +\dfrac{1}{2} \int_0^T \left[\| u(s)-\overline{u}\|^2 + \| C \, \left(y (s)-\overline{y}\right)\|^2\right] ds\nonumber\\
    &\;& \quad \quad +\int_0^T \left[\left(\overline{u},u(s)-\overline{u}\right)_{\mathbb{R}^m}+\left(C\overline{y}-z,\, C \left( y (s)-\overline{y}\right)\right)_{\mathbb{R}^n}\right] ds+g(y(T)).
    \end{eqnarray}
We now focus on the term
    \begin{equation}\label{term to be computed}
        \int_0^T\left(C\, \overline{y}-z,\,  C \left( y(s)-\overline{y}\right)\right)_{\mathbb{R}^n}ds.
    \end{equation}
    We recall that the pair $(\overline{u},\overline{y})$ is optimal. Then, by using the convexity of $U$ and the invertibility of $A$, for any $u\in U$ we have the first order optimality condition\footnote{Indeed, consider the function $f:\left[0,1\right]\longrightarrow \mathbb{R}$, defined as $f\left(\delta\right)\coloneqq J_s\left(\left(\overline{u},\overline{y}\right)+\delta \left(u-\overline{u},y-\overline{y}\right)\right)$. Since $\left(\overline{u},\overline{y}\right)$ minimizes $J_s$, $f$ achieves its minimum at $\delta = 0$, whence $f^{\prime}\left(0\right)\geq 0$. Now, by \eqref{eq: steady_functional}, $f^{\prime}\left(0\right)=\left(\overline{u},u-\overline{u}\right)_{\mathbb{R}^m}+\left(C \, \overline{y}-z,C\, \left(y-\overline{y}\right)\right)_{\mathbb{R}^n}$. Now, the invertibility of $A$ guarantees the existence of an adjoint state $\overline{p}$ solving $0=A^*\overline{p}+C^*(C\, \overline{y}-z)$. Then, we can rewrite $f^{\prime}\left(0\right)=\left(\overline{u}+B^*\overline{p},u-\overline{u}\right)_{\mathbb{R}^m}$, whence (remembering that $f^{\prime}\left(0\right)\geq 0$) $\left(\overline{p},B\left(u-\overline{u}\right)\right)_{\mathbb{R}^n}\geq -\left(\overline{u},u-\overline{u}\right)_{\mathbb{R}^m}$.
    }
    \begin{equation}\label{steady_opt_cond}
    \left(\overline{p},B\left(u-\overline{u}\right)\right)_{\mathbb{R}^n}\geq -\left(\overline{u},u-\overline{u}\right)_{\mathbb{R}^m},
    \end{equation}
    where $0=A^*\overline{p}+C^*(C\, \overline{y}-z)$,
    which means that $\overline{u}$ is the projection of $-B^*\overline{p}$ onto $U$.
    On the other hand, the pairs $(u(\cdot),y(\cdot))$ and $(\overline{u},\overline{y})$ satisfy the equation in \eqref{eq: linear ODE}. Hence, we have
    \begin{equation}\label{difference_eq: linear ODE}
	\begin{cases}
	\frac{d}{ds}(y-\overline{y})=A(y-\overline{y})+B(u-\overline{u})\hspace{1 cm}& s\in  (0,T)\\
	y(0)-\overline{y}=x-\overline{y}.
	\end{cases}
	\end{equation}
Now, since $A$ is invertible the set $M_s\coloneqq \left\{\left(u,-A^{-1}Bu\right) \ | \ u\in U\right\}$ is well defined. Therefore, for a.e. $s\in (0,T)$, $u(s)$ (i.e. the time-evolution control evaluated at time $s$) is an admissible steady control in $M_s$. Hence, we are allowed to use \eqref{steady_opt_cond}, getting
\begin{equation*}
    \int_0^T  \left(\overline{p},B(u(s)-\overline{u})\right)_{\mathbb{R}^m} ds\geq -\int_0^T \left(\overline{u},u(s)-\overline{u}\right)_{\mathbb{R}^m} ds.
\end{equation*}
Employing the above inequality and \eqref{difference_eq: linear ODE} and taking into account $y(0)=x$,  the term \eqref{term to be computed} writes as:
    \begin{eqnarray}\label{lemma_rappr_lower_bound_eq6}
    \int_0^T\left(C\overline{y}-z,\, C \left( y(s)-\overline{y}\right)\right)_{\mathbb{R}^n}ds &=&\int_0^T\left(C^*\left(C\overline{y}-z\right),\, y(s)-\overline{y}\right)_{\mathbb{R}^n}ds\nonumber\\
    &=&-\int_0^T\left(\overline{p},\, A\left(y(s)-\overline{y}\right)\right)_{\mathbb{R}^n}ds\nonumber\\
    &=&-\int_0^T\left(\overline{p}, \frac{d}{ds}(y-\overline{y})-B(u-\overline{u})\right)_{\mathbb{R}^n}ds\nonumber\\
    &=&\left(\overline{p},y(0)-\overline{y}\right)_{\mathbb{R}^n}-\left(\overline{p},y(T)-\overline{y}\right)_{\mathbb{R}^n}\nonumber\\
    &\;&\quad \quad  + \int_0^T  \left(\overline{p},B(u-\overline{u})\right)_{\mathbb{R}^m} ds\nonumber\\
    &\geq&\left(\overline{p},x-y(T)\right)_{\mathbb{R}^n} -\int_0^T \left(\overline{u},u(s)-\overline{u}\right)_{\mathbb{R}^m} ds.
    \end{eqnarray}
    Finally, the conclusion follows by combining \eqref{eq: value T and J_2} and \eqref{lemma_rappr_lower_bound_eq6}.
\end{proof}

We can now give the proof of Proposition \ref{lemma_convaver}, which follows from Lemma \ref{lemma_rappr_lower_bound} and the $U$-stabilizability assumption in Definition \ref{definition_U_stab}.

\begin{proof}[Proof of Proposition \ref{lemma_convaver}]
   Let $x\in \R^n$ be fixed, and consider, for all $T>0$, the trajectory $y_1(t) = \overline{y}$ for all $t\in [0,T]$, which is associated to the constant control $u_1(t) = \overline{u}$ for all $t\in (0,T)$. By the $U$-stabilizability assumption (see Definition \ref{definition_U_stab}), there exist a control $u$ and its associated state trajectory $y$ such that
   $$
   \|u-\overline{u}\|_{L^1\cap L^2} + \|y-\overline{y}\|_{L^1\cap L^2} \leq K \| y_1(0) - x\|.
   $$
   Using the linearity of the dynamics, we can deduce from this that 
   $\|y(t) - \overline{y}\|\leq K\| y_1(0) - x\|$ for all $t\in [0,T]$.
   Then, using Lemma \ref{lemma_rappr_lower_bound} and the definition of the value function, we deduce that
   \begin{equation}\label{upper bound V}
   V(x,T) \leq J_{T,x} (u) \leq T\, V_s + C,
   \end{equation}
   for some constant independent of $T$.
   
   The lower bound follows from \eqref{ineq} applied to the optimal control $u_{_{T}}$, that is
   \begin{eqnarray}
    J_{T,x}(u_{_{T}}) &\geq&T\, V_s+\frac{1}{2} \int_0^T \left[\| u_{_{T}}(s)-\overline{u}\|^2 + \| C \, \left(y_{_{T}}(s)-\overline{y}\right)\|^2\right] ds\nonumber\\
    &\;&+\left(\overline{p},x-y_{_{T}}(T)\right)_{\mathbb{R}^n}+g(y_{_{T}}(T)).\nonumber
    \end{eqnarray}
    It then suffices to notice that the integral term is positive and, by the $T$-uniform bound of the optimal trajectory in \Cref{lemma_unif_bound}, the two last terms in the above inequality are bounded by a constant $K$ independent of $T$. Hence, one has
    \begin{equation}\label{lower bound V}
        V(x,T) = J_{T,x}(u_{_{T}}) \geq T V_{s} - K.
    \end{equation}
    The conclusion then follows after dividing the inequalities \eqref{upper bound V} and \eqref{lower bound V} by $T$ and taking the limit as $T\to +\infty$.
\end{proof}

We end this subsection with the following Lipschitz estimate, uniform in $T$, that is also consequence of the turnpike property and will be useful in the proof of Theorem \ref{th_turnpikeHJB}.

\begin{lemma}\label{lemma V lip estimate}
    Assume 
$(A,B)$ is $U$-stabilizable and $(A,C)$ is detectable. Let $V$ be the function defined in \eqref{eq: value function}. 
    Then, for any $M>0$, there exists a constant $K_M>0$ such that for all $T>0$ and all $x_1$ and $x_2$ in $\mathbb{R}^n$ satisfying $\|x_i\|\leq M$, we have
    \begin{equation*}
        \left|V(x_2,T)-V(x_1,T)\right|\leq K_M\left\|x_2-x_1\right\|.
    \end{equation*}
\end{lemma}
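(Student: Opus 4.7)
The plan is to build, for each pair $(x_1,x_2)$ and each $T>0$, a competitor control for the initial condition $x_2$ that shadows the optimum starting from $x_1$. Without loss of generality, assume $V(x_2,T) \geq V(x_1,T)$. Let $u_1\in \mathcal{U}_T$ be an optimal control for $V(x_1,T)$ and $y_1$ its state. Extend $u_1$ beyond $T$ by $\overline{u}$, propagating the ODE, so that $(u_1,y_1)\in L^2_{\mathrm{loc}}(0,+\infty;U)\times C([0,+\infty);\R^n)$. Applying the $U$-stabilizability assumption (Definition \ref{definition_U_stab}) to this target trajectory and to the initial datum $x_2$, we obtain a control $u_2\in L^2_{\mathrm{loc}}(0,+\infty;U)$ with state $y_2$ satisfying
\begin{equation*}
\|u_2-u_1\|_{L^1\cap L^2(0,+\infty)}+\|y_2-y_1\|_{L^1\cap L^2(0,+\infty)}\leq K\|x_2-x_1\|,
\end{equation*}
with $K=K(A,B,U)$. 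An energy estimate for $z:=y_2-y_1$, solving $\dot z = Az+B(u_2-u_1)$ with $z(0)=x_2-x_1$, combined with the above $L^2$ bound, produces a pointwise estimate $\sup_{t\in[0,T]}\|y_2(t)-y_1(t)\|\leq K'\|x_2-x_1\|$, where $K'$ is independent of $T$.

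Using $u_2|_{[0,T]}$ as a test control in the definition of $V(x_2,T)$, one gets $V(x_2,T)-V(x_1,T)\leq J_{T,x_2}(u_2)-J_{T,x_1}(u_1)$, and the remaining task is to bound this difference linearly in $\|x_2-x_1\|$. Expanding via the identity $\|a\|^2-\|b\|^2 = \|a-b\|^2 + 2(a-b)\cdot b$, applied to $(a,b)=(u_2,u_1)$ and $(a,b)=(Cy_2-z,Cy_1-z)$, one then splits $u_1 = \overline{u}+(u_1-\overline{u})$ and $Cy_1-z = (C\overline{y}-z)+C(y_1-\overline{y})$. The constant pieces are controlled by $\|u_2-u_1\|_{L^1(0,T)}$ and $\|y_2-y_1\|_{L^1(0,T)}$; the cross pieces by Cauchy--Schwarz against $\|u_1-\overline{u}\|_{L^2(0,T)}$ and $\|y_1-\overline{y}\|_{L^2(0,T)}$, which, thanks to the turnpike bound \eqref{lemma_unif_boun_2_intro} from Theorem \ref{th_TURNPIKE}, are bounded uniformly in $T$ whenever $\|x_1\|\leq M$; finally, the quadratic contribution $\|u_2-u_1\|_{L^2}^2+\|y_2-y_1\|_{L^2}^2\leq K^2\|x_2-x_1\|^2$ is linearized via $\|x_2-x_1\|\leq 2M$.

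It remains to handle the final cost $g(y_2(T))-g(y_1(T))$ using only local Lipschitz continuity of $g$. The same energy argument as above, now applied to $y_1-\overline{y}$ with the $L^2$ bound \eqref{lemma_unif_boun_2_intro}, yields a pointwise bound $\|y_1(T)\|\leq R(M)$ uniform in $T$, and in turn $\|y_2(T)\|\leq R(M)+2MK'$. The local Lipschitz constant of $g$ on the fixed ball $B(0,R(M)+2MK')$, multiplied by the pointwise estimate on $y_2(T)-y_1(T)$, delivers a bound of the desired form. Combining all pieces and exchanging the roles of $x_1$ and $x_2$ proves the lemma with $K_M$ depending only on $A,B,C,U,z,g$ and $M$.

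The main obstacle is verifying that each ingredient is genuinely uniform in $T$ and in $x_1$ on the closed ball of radius $M$: the pointwise bound on $y_1(T)$, the $L^2$ bound on $(u_1-\overline{u},y_1-\overline{y})$, and the stabilizability constant. The first follows from the classical identity $\|y_1(t)-\overline{y}\|^2 = \|x_1-\overline{y}\|^2 + 2\int_0^t (y_1-\overline{y})\cdot \frac{d}{ds}(y_1-\overline{y})\,ds$ combined with Cauchy--Schwarz and the uniform $L^2$ bound; the second is a uniform version of \eqref{lemma_unif_boun_2_intro}, obtained by inspecting the upper bound $V(x,T)\leq TV_s+C$ built in Proposition \ref{lemma_convaver}, whose constant is polynomial in $\|x-\overline{y}\|$, hence bounded on compact sets; the third is intrinsic to Definition \ref{definition_U_stab}.
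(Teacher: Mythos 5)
Your proposal is correct and follows essentially the same route as the paper: use $U$-stabilizability to construct a competitor control from $x_2$ that shadows the optimal control from $x_1$, then bound $J_{T,x_2}(\hat u)-J_{T,x_1}(u_1)$ linearly in $\|x_2-x_1\|$ and exchange the roles of $x_1$ and $x_2$. The only difference is that you spell out the estimate the paper merely asserts in its equation \eqref{diff_functionals} — centering the cross terms at $(\overline u,\overline y)$ and invoking the $T$-uniform bound \eqref{lemma_unif_boun_2_intro} together with the pointwise bounds of Lemma \ref{lemma_unif_bound} — which is exactly what is needed to make that step rigorous.
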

\begin{proof}
    We prove this Lemma by using the definition of $V(x,T)$ as minimal value of $J_{T,x}$. Let $u_{_{T},x_1}\in \mathcal{U}_T$ be an optimal control for $J_{T,x_1}$.
    
    \textit{Step 1} \ \textbf{Construction of stabilizing control}\\
    Since $(A,B)$ is $U$-stabilizable, there exists a control $\hat{u}\in L^2(0,T;U)$, such that
    \begin{equation}
        \left\|\hat{u}-u_1\right\|_{L^1\cap L^2(0,T)}+\left\|\hat{y}-y_1\right\|_{L^1\cap L^2(0,T)}\leq K\left(A,B,U\right)\left\|x_2-x_1\right\|,
    \end{equation}
    $\hat{y}$ being the solution to \eqref{eq: linear ODE}, with initial datum $x_2$ and control $\hat{u}$ and $\left\|\cdot\right\|_{L^1\cap L^2}\coloneqq \left\|\cdot\right\|_{L^1\left(0,T\right)}+\left\|\cdot\right\|_{L^2\left(0,T\right)}$.
    
    We have then
    \begin{equation}\label{diff_functionals}
        \left|J_{T,x_2}(\hat{u})-J_{T,x_1}(u_{_{T},x_1})\right|\leq K_M\left\|x_2-x_1\right\|,
    \end{equation}
    where $K_M$ is independent of $T>0$.
    
    \textit{Step 2} \ \textbf{Conclusion}\\
    For $i=1,2$, let $u_{_{T},x_i}$ be optimal controls for $J_{T,x_i}$ and let $\hat{u}$ defined as above for $u\coloneqq u_{_{T},x_1}$. Then, by definition of value function and \eqref{diff_functionals}
    \begin{eqnarray*}
    V(x_2,T)-V(x_1,T)&=&J_{T,x_2}(u_{_{T},x_2})-J_{T,x_1}(u_{_{T},x_1})\nonumber\\
    &\leq &J_{T,x_2}(\hat{u})-J_{T,x_1}(u_{_{T},x_1})\nonumber\\
    &\leq &K_M\left\|x_2-x_1\right\|.
    \end{eqnarray*}
    By the arbitrariness of $x_1$ and $x_2$, we obtain the desired Lipschitz property.
\end{proof}

\subsection{The infinite horizon linear-quadratic problem}
\label{subsection infinte time horizon pbm}

Here we introduce the auxiliary infinite time horizon optimal control problem announced in the introduction,
that allows us to compute the optimal cost of stabilizing the trajectory to the turnpike from the initial state.
For each $x\in\R^n$, the dynamics are determined by the same ODE in \eqref{eq: linear ODE}, in this case considering the time interval $(0,\infty)$:
\begin{equation}\label{eq: ODE infinity}
\begin{array}{ll}
\dot{y}(s) = A\, y(s) + B\, u(s), & s\in (0,\infty) \\
y (0) = x.
\end{array}
\end{equation} 
The set of admissible controls is $\mathscr{A}_x$  as defined in \eqref{eq: admissible controls} where $V_s = J_s(\overline{u},\overline{y})$ is the constant defined in \eqref{V_s def}. And the problem we shall consider is to minimize the cost functional
\begin{equation}\label{eq: functional_infinity}
    J_{\infty,x}(u):= \displaystyle\int_{0}^{\infty}\left[\dfrac{1}{2}\|u(s)\|^{2} + \dfrac{1}{2}\|C\, y(s) - z\|^{2} - V_s \right]\;ds,
\end{equation}
over the controls $u\in \mathscr{A}_x$. The value function for this problem is $W(x)$ as is defined in \eqref{eq: W def}. Note that the set of admissible controls is different for each $x$. In addition, since $(A,B)$ is $U$-stabilizable to $\overline{y}$, we deduce that it is nonempty for all $x$.

The following lemma follows directly from the definition of $\mathscr{A}_x$.
\begin{lemma}\label{lemma_rappr_infinity 1}
    Let $\left(\overline{u},\overline{y}\right)$ be the minimizer for $J_s$ defined in \eqref{eq: steady_functional}. For any $x\in\mathbb{R}^n$ and any control $u\in \mathscr{A}_x$, we denote by $y$ the solution to \eqref{eq: ODE infinity} with control $u$ and initial datum $x$. Then it holds
    $$
    u-\overline{u}\in L^2(0,+\infty;\mathbb{R}^m) \quad \text{and} \quad y-\overline{y}\in L^2(0,+\infty;\mathbb{R}^n).
    $$
    In addition, $\{y(t)\}_{t> 0}$ is bounded in $\R^n$ and satisfies
    $$
    y(t)\longrightarrow\overline{y} \quad \text{as} \quad t\to +\infty.
    $$
    The functional $J_{\infty,x}$ can be written as
    \begin{equation}\label{lemma_rappr_infinity_eq1}
    J_{\infty,x}(u) \geq \dfrac{1}{2} \int_0^{\infty} \left[\| u(s)-\overline{u}\|^2 + \| C \, \left(y(s)-\overline{y}\right)\|^2\right] ds +(\overline{p},x-\overline{y})_{\mathbb{R}^n}
    \end{equation}
    and it admits a minimizer $u^{*}$ in $\mathscr{A}_x$.
\end{lemma}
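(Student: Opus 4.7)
The plan is to first derive a pointwise version of the lower bound obtained in \Cref{lemma_rappr_lower_bound} and then bootstrap it using the detectability of $(A,C)$ to control the large-time behavior of the trajectory. Repeating the computation in the proof of \Cref{lemma_rappr_lower_bound} (using $A^*\overline{p}+C^*(C\overline{y}-z)=0$ and $A\overline{y}+B\overline{u}=0$ to rewrite the cross term $(C\overline{y}-z,C(y-\overline{y}))$ as $(\overline{p},B(u-\overline{u}))-\frac{d}{dt}(\overline{p},y-\overline{y})$, and invoking the steady optimality condition $(\overline{u}+B^*\overline{p},u-\overline{u})\ge 0$ for $u\in U$), I would obtain, for every $u\in\mathscr{A}_x$ and every $T>0$,
\begin{equation}\label{eq:plan-lb}
\int_0^T\!\!\left[\tfrac{1}{2}\|u\|^2+\tfrac{1}{2}\|Cy-z\|^2-V_s\right]ds \;\ge\; Q(T) + (\overline{p},x-\overline{y})_{\mathbb{R}^n} - (\overline{p},y(T)-\overline{y})_{\mathbb{R}^n},
\end{equation}
with $Q(T):=\frac{1}{2}\int_0^T\![\|u-\overline{u}\|^2+\|C(y-\overline{y})\|^2]ds\ge 0$.

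The main obstacle is that admissibility $u\in\mathscr{A}_x$ is only an integral condition and does not a priori prevent $\|y(T)\|$ from blowing up. I would resolve this through a quantitative use of the detectability of $(A,C)$: pick $K$ such that $A+KC$ is Hurwitz, and apply Duhamel's formula to $\dot\xi=(A+KC)\xi+Bv-KC\xi$, where $\xi:=y-\overline{y}$ and $v:=u-\overline{u}$. Using the exponential bound $\|e^{t(A+KC)}\|\le Me^{-\alpha t}$ together with Cauchy--Schwarz, one obtains
\begin{equation}\label{eq:plan-det}
\|y(T)-\overline{y}\|^2 \;\le\; C_1 + C_2\,Q(T),
\end{equation}
where $C_1,C_2$ depend only on $A,B,C,K$ and $x-\overline{y}$. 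Since admissibility forces the left-hand side of \eqref{eq:plan-lb} to be uniformly bounded in $T$, inserting \eqref{eq:plan-det} into \eqref{eq:plan-lb} produces a self-improving inequality of the form $Q(T)\le M_1+M_2\sqrt{Q(T)}$, from which $Q(T)$ is bounded uniformly in $T$. This proves $u-\overline{u},\,C(y-\overline{y})\in L^2(0,\infty)$ and, via \eqref{eq:plan-det}, that $\{y(t)\}_{t>0}$ is bounded.

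To upgrade $C(y-\overline{y})\in L^2$ to $y-\overline{y}\in L^2$ and deduce $y(t)\to\overline{y}$, I would apply Duhamel's formula once more with the Hurwitz matrix $A+KC$: the forcing $Bv-KC\xi$ now belongs to $L^2(0,\infty;\mathbb{R}^n)$, so Young's convolution inequality against the $L^1$ kernel $\|e^{\cdot(A+KC)}\|$ gives $\xi\in L^2$, and the standard fact that the convolution of an $L^1$ kernel with an $L^2$ function vanishes at infinity yields $\xi(t)\to 0$. Letting $T\to\infty$ in \eqref{eq:plan-lb}, the term $(\overline{p},y(T)-\overline{y})_{\mathbb{R}^n}$ vanishes and the announced inequality \eqref{lemma_rappr_infinity_eq1} follows.

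For the existence of a minimizer I would use the direct method. Given a minimizing sequence $(u_n)\subset\mathscr{A}_x$, inequality \eqref{lemma_rappr_infinity_eq1} furnishes a uniform $L^2$ bound on $u_n-\overline{u}$, so after extraction $u_n-\overline{u}\rightharpoonup v^{*}$ weakly in $L^2(0,\infty;\mathbb{R}^m)$, and $u^{*}:=v^{*}+\overline{u}$ satisfies $u^{*}(s)\in U$ a.e.\ since $U$ is closed and convex. The associated trajectories $y_n$ converge to $y^{*}$ locally uniformly and $y_n-\overline{y}\rightharpoonup y^{*}-\overline{y}$ weakly in $L^2$, while the uniform decay $y_n(t)\to\overline{y}$ furnished by the previous step ensures that no mass is lost at infinity in the cross terms. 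Weak lower semicontinuity of the convex quadratic cost then gives $J_{\infty,x}(u^{*})\le\liminf_n J_{\infty,x}(u_n)=W(x)$; in particular $u^{*}\in\mathscr{A}_x$ and $u^{*}$ is the desired minimizer.
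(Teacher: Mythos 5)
Your argument is correct and reaches the same conclusions by the same overall architecture as the paper: a pointwise lower bound derived from the steady optimality condition, a detectability-based a priori estimate on the trajectory, a bootstrap to get the $L^2$ bounds and the convergence $y(t)\to\overline{y}$, and the direct method for existence. The genuine difference is in how detectability is exploited. The paper proves a bespoke observability inequality (Lemma \ref{lemma_obs_conseq_dynamical}) by splitting $\mathbb{R}^n$ into the stable and antistable $A$-invariant subspaces and establishing a unit-time observability estimate on the antistable part; this inequality is then invoked three times (boundedness, upgrade from $C(y-\overline{y})\in L^2$ to $y-\overline{y}\in L^2$, and the convergence $y(t)\to\overline{y}$ via a subsequence $t_m$ along which $y(t_m)\to\overline{y}$). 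You instead use the output-injection characterization of detectability ($A+KC$ Hurwitz) together with Duhamel's formula, which delivers the same estimate $\|y(T)-\overline{y}\|^2\le C_1+C_2\,Q(T)$ in one line and lets you merge the paper's Steps 1 and 2 into the single self-improving inequality $Q(T)\le M_1+M_2\sqrt{Q(T)}$; the Young-convolution argument then replaces the paper's subsequence-plus-propagation step. Your route is arguably cleaner and avoids proving the auxiliary observability lemma (though the paper needs that lemma elsewhere, e.g.\ in Lemma \ref{lemma_unif_bound}, so it is not wasted effort there). Two small points of care: the claim that an $L^1$ kernel convolved with an $L^2$ function vanishes at infinity is not a general fact, but it does hold here because the kernel $\|e^{\tau(A+KC)}\|$ decays exponentially, so the standard splitting at $t/2$ closes the argument; and in the direct-method step the weakly lower semicontinuous object is not the quadratic form in \eqref{lemma_rappr_infinity_eq1} alone but the rewritten functional $Q_\infty(u)+\int_0^\infty(\overline{u}+B^*\overline{p},u(s)-\overline{u})\,ds+(\overline{p},x-\overline{y})$, whose extra integrand is nonnegative and affine, hence also weakly lsc --- the paper is equally terse on this point.
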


\begin{proof}
    \textit{Step 1} \  \textbf{Boundedness of $\left\{y(t)\right\}_{t>0}\subset \mathbb{R}^n$}\\
    Take any $u\in \mathscr{A}_x$ and let $y$ be the solution to \eqref{eq: ODE infinity}, with initial datum $x$ and control $u$. 
    By Lemma \ref{lemma_obs_conseq_dynamical} applied to $y-\overline{y}$, we have
    \begin{equation*}
        \|y(t)-\overline{y}\|^2\leq K\left[\|x-\overline{y}\|^2+\int_0^t \left[\| u(s)-\overline{u}\|^2 + \| C \, \left(y(s)-\overline{y}\right)\|^2\right] ds\right],
    \end{equation*}
    whence
    \begin{equation*}
        \dfrac{1}{2} \int_0^t \left[\| u(s)-\overline{u}\|^2 + \| C \, \left(y(s)-\overline{y}\right)\|^2\right] ds\geq \alpha \|y(s)-\overline{y}\|^2-K,
    \end{equation*}
    where $\alpha = \alpha(A,C)>0$ and $K=K(A,B,C,x,z)\geq 0$. Using the above inequality and adapting \eqref{ineq}, yields
    \begin{eqnarray*}
    J_{\infty,x}(u)&=& \lim_{t\to +\infty}\int_{0}^{t}\left[\dfrac{1}{2}\|u(s)\|^{2} + \dfrac{1}{2}\|C\, y(s) - z\|^{2} - V_s \right]\;ds\nonumber\\
    &\geq&\lim_{t\to +\infty}\left[\dfrac{1}{2} \int_0^t \left[\| u(s)-\overline{u}\|^2 + \| C \, \left(y(s)-\overline{y}\right)\|^2\right] ds\right.\nonumber\\
    &\;&\quad\quad\quad +(\overline{p},x-y(t))_{\mathbb{R}^n}\bigg]\nonumber\\
    &\geq&\limsup_{t\to +\infty}\left[\dfrac{1}{2} \int_0^t \left[\| u(s)-\overline{u}\|^2 + \| C \, \left(y(s)-\overline{y}\right)\|^2\right] ds\right.\nonumber\\
    &\;&\quad\quad\quad -K\left(1+\left\|y(t)-\overline{y}\right\|\right)\bigg]\nonumber\\
    &\geq&\limsup_{t\to +\infty}\left[\alpha \|y(t)-\overline{y}\|^2-K\left(\|y(t)-\overline{y}\|+2\right)\right]\nonumber\\
    &\geq&\dfrac{\alpha}{2}\limsup_{t\to +\infty} \|y(t)-\overline{y}\|^2-K.\nonumber\\
    \end{eqnarray*}
    Now, since $u\in \mathscr{A}_x$, the functional $J_{\infty,x}(u)<+\infty$. This, together with the above estimate, implies the boundedness of $\left\{y(t)\right\}_{t>0}\subset \mathbb{R}^n$.\\
    \textit{Step 2} \  \textbf{Proof of $u-\overline{u}\in L^2(0,+\infty;\mathbb{R}^m)$ and $y-\overline{y}\in L^2(0,+\infty;\mathbb{R}^n)$.}\\
    By Step 1, there exists a constant $K(u)\geq 0$, such that $\forall \, t>0$, $\left\|y(t)\right\|\leq K(u)$.
    By \eqref{ineq}, one gets
    \begin{equation}
    \label{lemma_rappr_infinity_eq3}
        \begin{aligned}
            &\int_{0}^{t}\left[\dfrac{1}{2}\|u(s)\|^{2} + \dfrac{1}{2}\|C\, y(s) - z\|^{2} - V_s \right]\;ds\\
            & \qquad \qquad \geq \dfrac{1}{2} \int_0^t \left[\| u(s)-\overline{u}\|^2 + \| C \, \left(y(s)-\overline{y}\right)\|^2\right] ds + (\overline{p},x-y(t))_{\mathbb{R}^n}
        \end{aligned}
    \end{equation}
    and using the above bound, for any $t>0$, we have
    \begin{equation*}
        \begin{aligned}
            &\int_{0}^{t}\left[\dfrac{1}{2}\|u(s)\|^{2} + \dfrac{1}{2}\|C\, y(s) - z\|^{2} - V_s \right]\;ds\\
            & \qquad \qquad \geq \dfrac{1}{2} \int_0^t \left[\| u(s)-\overline{u}\|^2 + \| C \, \left(y(s)-\overline{y}\right)\|^2\right] -K(u),
        \end{aligned}
    \end{equation*}
    whence, since $u\in \mathscr{A}_x$,
    \begin{eqnarray*}\label{}
    +\infty > J_{\infty,x}(u)&=&\lim_{t\to +\infty}\int_{0}^{t}\left[\dfrac{1}{2}\|u(s)\|^{2} + \dfrac{1}{2}\|C\, y(s) - z\|^{2} - V_s \right]\;ds\nonumber\\
    &\geq&\dfrac{1}{2} \int_0^{\infty} \left[\| u(s)-\overline{u}\|^2 + \| C \, \left(y(s)-\overline{y}\right)\|^2\right] ds-K(u),
    \end{eqnarray*}
    which in turn implies $u-\overline{u}\in L^2(0,+\infty;\mathbb{R}^m)$ and $C(y-\overline{y})\in L^2(0,+\infty;\mathbb{R}^n)$. Now, since the pair $(A,C)$ is detectable, adapting the techniques of the proof of Lemma \ref{lemma_obs_conseq_dynamical}, we have in fact $y-\overline{y}\in L^2(0,+\infty;\mathbb{R}^n)$.
    
    \textit{Step 3} \  \textbf{Proof of $y(t)\longrightarrow \overline{y}$ as $t\to +\infty$.}\\
    Now, since $y-\overline{y}\in L^2(0,+\infty;\mathbb{R}^n)$, there exists a sequence $t_m\to +\infty$, such that
    \begin{equation*}
        y(t_m)\underset{m\to +\infty}{\longrightarrow}\overline{y}.
    \end{equation*}
    By the above convergence and $u-\overline{u}\in L^2(0,+\infty;\mathbb{R}^m)$ and $C(y-\overline{y})\in L^2(0,+\infty;\mathbb{R}^n)$, for any $\varepsilon >0$, there exists $m_{\varepsilon}\in \mathbb{N}$ such that for every $m>m_{\varepsilon}$
    \begin{equation*}
        \left\|y(t_m)-\overline{y}\right\|< \varepsilon \hspace{0.3 cm}\mbox{and}\hspace{0.3 cm}\int_{t_m}^{+\infty} \left[\| u(s)-\overline{u}\|^2 + \| C \, \left(y(s)-\overline{y}\right)\|^2\right] ds<\varepsilon^2.
    \end{equation*}
    Then, by Lemma \ref{lemma_obs_conseq_dynamical}, for any $m>m_{\varepsilon}$ and for any $t>t_m$ we have
    \begin{equation*}
        \left\|y(t)-\overline{y}\right\|^2\leq K\left[\left\|y(t_m)-\overline{y}\right\|^2+\int_{t_m}^t \left[\| u(s)-\overline{u}\|^2 + \| C \, \left(y(s)-\overline{y}\right)\|^2\right] ds\right]<2K \varepsilon^2,
    \end{equation*}
    whence $y(t) \longrightarrow\overline{y}\,$ as $\,t\to +\infty$.
    
    \textit{Step 4} \  \textbf{Proof of \eqref{lemma_rappr_infinity_eq1}}\\
    The representation formula \eqref{lemma_rappr_infinity_eq1} is a consequence of \eqref{eq: functional_infinity}, \eqref{lemma_rappr_infinity_eq3}, $u-\overline{u}\in L^2(0,+\infty;\mathbb{R}^m)$, $\,y-\overline{y}\in L^2(0,+\infty;\mathbb{R}^n)$ and $y(t)\underset{t \to +\infty}{\longrightarrow}\overline{y}$. Existence of the minimizer follows from \eqref{lemma_rappr_infinity_eq1} and the Direct Method in the Calculus of Variations.
\end{proof}


Next we prove a local Lipschitz estimate for $W$ that will be used in the proof of Theorem \ref{th_turnpikeHJB}.

\begin{lemma}\label{lemma W lip estimate}
    Assume 
$(A,B)$ is $U$-stabilizable to $\overline{y}$ and $(A,C)$ is detectable and let $W$ be the function defined in \eqref{eq: W def}. 
    Then, for any $M>0$, there exists a constant $K_M>0$ such that
    \begin{equation*}
        \left|W(x_2)-W(x_1)\right|\leq K_M\left\|x_2-x_1\right\|,
    \end{equation*}
    for all $x_1$ and $x_2$ in $\R^n$ satisfying $\|x_i\|\leq M$.
\end{lemma}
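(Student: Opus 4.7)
The plan is to mirror the strategy of Lemma \ref{lemma V lip estimate}, adapted to the infinite horizon setting. Fix $M>0$ and let $x_1, x_2 \in \R^n$ satisfy $\|x_i\|\leq M$. By Lemma \ref{lemma_rappr_infinity 1}, the functional $J_{\infty,x_1}$ admits a minimizer $u_1^*\in \mathscr{A}_{x_1}$, with corresponding trajectory $y_1^*$ solving \eqref{eq: ODE infinity} with initial datum $x_1$. I would construct a competitor control starting from $x_2$ that shadows $(u_1^*,y_1^*)$, then compare the cost functionals and exploit the optimality of $u_1^*$.

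Since $u_1^*\in L^2_{loc}(0,+\infty;U)$, the $U$-stabilizability of $(A,B)$ to the target trajectory $y_1^*$ from initial datum $x_2$ yields a control $\hat u \in L^2_{loc}(0,+\infty;U)$, with associated state $\hat y$ starting from $x_2$, satisfying
\begin{equation*}
\|\hat u-u_1^*\|_{L^1\cap L^2(0,+\infty)} + \|\hat y-y_1^*\|_{L^1\cap L^2(0,+\infty)}\leq K\|x_2-x_1\|,
\end{equation*}
with $K=K(A,B,U)$. Writing $\hat u-\overline u=(\hat u-u_1^*)+(u_1^*-\overline u)$ and invoking Lemma \ref{lemma_rappr_infinity 1}, one first verifies that $\hat u-\overline u,\hat y-\overline y\in L^2$ and $\hat y(t)\to\overline y$ as $t\to+\infty$, so that $\hat u\in \mathscr{A}_{x_2}$.

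By the optimality of $u_1^*$,
\begin{equation*}
W(x_2)-W(x_1)\leq J_{\infty,x_2}(\hat u)-J_{\infty,x_1}(u_1^*)=\int_0^{+\infty}\!\Bigl[\tfrac12(\hat u-u_1^*)\cdot(\hat u+u_1^*)+\tfrac12 C(\hat y-y_1^*)\cdot(C\hat y+Cy_1^*-2z)\Bigr]ds.
\end{equation*}
Rewriting $\hat u+u_1^*=(\hat u-\overline u)+(u_1^*-\overline u)+2\overline u$ and similarly $C\hat y+Cy_1^*-2z=C(\hat y-\overline y)+C(y_1^*-\overline y)+2(C\overline y-z)$, every resulting cross term is estimated by Cauchy--Schwarz: the $L^2$ pairings use the smallness $\|\hat u-u_1^*\|_{L^2}+\|\hat y-y_1^*\|_{L^2}\leq K\|x_2-x_1\|$, while the terms involving the constants $2\overline u$ and $2(C\overline y-z)$ use the $L^1$ smallness of $\hat u-u_1^*$ and $\hat y-y_1^*$ from the same stabilizability estimate.

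The main step to close the argument is to show that $\|u_1^*-\overline u\|_{L^2}$ and $\|y_1^*-\overline y\|_{L^2}$ are bounded by a constant depending only on $M$, uniformly over $\|x_1\|\leq M$. This will follow from the lower bound \eqref{lemma_rappr_infinity_eq1} combined with a uniform upper bound $W(x_1)\leq K_M$, itself obtained by plugging into $J_{\infty,x_1}$ a $U$-stabilizing control steering $x_1$ to $\overline y$ and then passing from a control-norm bound to a state-norm bound via detectability of $(A,C)$, exactly as in Step 2 of the proof of Lemma \ref{lemma_rappr_infinity 1}. Inserting these uniform bounds gives $W(x_2)-W(x_1)\leq K_M\|x_2-x_1\|$, and exchanging the roles of $x_1$ and $x_2$ yields the other inequality. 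The principal technical obstacle is the rigorous treatment of the limit at infinity, ensuring that $\hat u\in \mathscr{A}_{x_2}$ and that every cross term in the quadratic expansion converges; this is precisely where the $L^1\cap L^2$ (rather than purely $L^2$) smallness in Definition \ref{definition_U_stab} becomes indispensable, as it is what controls the constant drift contributions $2\overline u$ and $2(C\overline y-z)$ in the expansion of the running cost.
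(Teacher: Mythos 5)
Your argument is correct and coincides with the paper's proof, which consists of the single line that the result follows the techniques in the proof of Lemma \ref{lemma V lip estimate}: one constructs, via $U$-stabilizability, a competitor control from $x_2$ tracking the optimal pair of $x_1$ in $L^1\cap L^2$ up to $K\|x_2-x_1\|$ and compares costs. Your write-up supplies the infinite-horizon details the paper leaves implicit --- the admissibility $\hat u\in\mathscr{A}_{x_2}$, the uniform $L^2$ bounds on $u_1^*-\overline u$ and $y_1^*-\overline y$ obtained from \eqref{lemma_rappr_infinity_eq1} together with detectability, and the role of the $L^1$ part of Definition \ref{definition_U_stab} in controlling the constant terms --- and correctly observes that the argument really invokes stabilizability to the optimal trajectory $y_1^*$, i.e.\ full $U$-stabilizability as in Theorem \ref{th_turnpikeHJB}, rather than merely stabilizability to $\overline y$ as the lemma's hypothesis literally states.
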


\begin{proof}
The proof follows the techniques in the proof of Lemma \ref{lemma V lip estimate}.
\end{proof}

\subsection{Proof of Theorem \ref{th_turnpikeHJB}}
\label{subsection: proof Thm 1.1 (1)}

We are now in position to give the proof of Theorem \ref{th_turnpikeHJB}. We split the proof in three steps. In the first one, we prove the statement (i) of the Theorem about the convergence of the value function. In the step 2, we prove the uniqueness result for the solution of the Hamilton-Jacobi-Bellman equation associated to the infinite horizon problem. Finally, in step 3, we prove that, in the unconstrained case $U=\R^n,$ the value function for the infinite horizon problem is in $C^1(\R^n)$.

\begin{proof}[Proof of Theorem \ref{th_turnpikeHJB}]
\textit{Step 1:} \  \textbf{Convergence.}
Let $\Omega\subset \R^n$ be a bounded set. For any given $x\in\Omega$ and $T>0$, let $u_{_{T}}(\cdot)$ and $y_{_{T}}(\cdot)$ be an optimal control for problem \eqref{eq: linear ODE}--\eqref{eq: functional} and its corresponding state trajectory. Then, as a consequence of the DPP, for any $T>0$ we can write
\begin{equation}\label{proof Thm 1.1 DPP}
    V(x,T) = \dfrac{1}{2}\int_0^{\frac{T}{2}} \left[ \| u_{_{T}}(s)\|^2 + \|C\, y_{_{T}}(s)-z\|^2\right]ds +
    V\left( y_{_{T}}\left(\dfrac{T}{2}\right), \dfrac{T}{2} \right).
\end{equation}
Now, using Lemma \ref{lemma V lip estimate} and that, as a consequence of the turnpike property \eqref{epsturnpike}, $y_{_{T}}(T/2)\to \bar{y}$ as $T\to \infty$, we deduce that
\begin{equation*}\label{Proof Thm 1.1 second term}
\lim_{T\to \infty} \left| V\left( y_{_{T}}\left(\dfrac{T}{2}\right), \dfrac{T}{2} \right) - V\left( \bar{y}, \dfrac{T}{2} \right)\right|=0.
\end{equation*}
Hence, we have

\begin{eqnarray}
\lim_{T\to \infty}  V\left( y_{_{T}}\left(\dfrac{T}{2}\right), \dfrac{T}{2} \right) - \dfrac{T}{2} V_s &=& \lim_{T\to \infty}  \left[V\left( y_{_{T}}\left(\dfrac{T}{2}\right), \dfrac{T}{2} \right) -  V\left( \bar{y}, \dfrac{T}{2} \right)\right. \nonumber \\
& & \left.\quad \quad + V\left( \bar{y}, \dfrac{T}{2} \right) - \dfrac{T}{2}V_s \right]\nonumber  \\
&=& \lim_{T\to \infty} V\left( \bar{y}, \dfrac{T}{2} \right) - \dfrac{T}{2}V_s\; =:\; \lambda.
\label{proof Thm1.1 lambda}
\end{eqnarray}
The existence of this limit can be justified by proving that the function
$$
T\longmapsto V(\bar{y},T) - T\, V_s
$$
is decreasing and bounded from below.
Indeed, observe that if $u_{_{T}}$ is an optimal control for $J_{x,T}$, then for any $T'>T$, we can use the control 
$$
\hat{u} (s) := \left\{
\begin{array}{cc}
    \bar{u} & s\in (0,T'-T) \\
     u_{_{T}} (s) & s\in [T'-T,T') 
\end{array}
\right.
$$
to prove the monotonicity. The boundedness from below can be obtained from the turnpike property.

Let us now prove that
\begin{equation}\label{proof thm1.1 first half convergence}
\lim_{T\to+\infty} \dfrac{1}{2}\int_{0}^{\frac{T}{2}}\left[\|u_{_{T}}(s)\|^{2} + \|C\, y_{_{T}}(s) - z\|^{2}  \right]\;ds-\dfrac{T}{2}V_s = W(x).
\end{equation}
Let $u^*\in \mathscr{A}_x$ be the optimal control for the functional $J_{\infty,x}$ defined in \eqref{eq: W def} and $y^*$ its corresponding state trajectory. For any $T>0$, as a consequence of the DPP for the infinite horizon problem, we have
\begin{eqnarray}
W(x) &=& \int_0^{\frac{T}{2}} \left[ \dfrac{1}{2} \| u^*(s)\|^2 + \dfrac{1}{2}\|C\, y^*(s)-z\|^2 - V_s\right] ds +W\left(y^*\left(\frac{T}{2}\right)\right) \nonumber \\
&\leq & \dfrac{1}{2}\int_0^{\frac{T}{2}} \left[  \| u_{_{T}}(s)\|^2 + \|C\, y_{_{T}}(s)-z\|^2 \right] ds - \dfrac{T}{2}V_s +W\left(y_{_{T}}\left(\frac{T}{2}\right)\right).\label{proof Thm 1.1 DPP W}
\end{eqnarray}
Now, observe that by plugging $\Bar{y}$ in formula \eqref{lemma_rappr_infinity_eq1} in Lemma \ref{lemma_rappr_infinity 1}, one can easily see that $W(\Bar{y})=0$.
Then, using Theorem \ref{th_TURNPIKE} and that, by Lemma \ref{lemma W lip estimate}, the function $W(\cdot)$ is continuous, we deduce that
\begin{equation}\label{proof thm1.1 liminf}
    \liminf_{T\to+\infty} \dfrac{1}{2}\int_{0}^{\frac{T}{2}}\left[\|u_{_{T}}(s)\|^{2} + \|C\, y_{_{T}}(s) - z\|^{2}  \right]\;ds-\dfrac{T}{2}V_s \geq W(x).
\end{equation}

Using again the DPP, this time for the value function $V$, we obtain for any $T>0$:
\begin{eqnarray}
    V(x,T) &=& \dfrac{1}{2}\int_0^{\frac{T}{2}} \left[\|u_{_{T}} (s)\|^2 + \|C\,y_{_{T}}(s)-z\|^2\right] ds + V\left(y_{_{T}}\left(\dfrac{T}{2}\right),\dfrac{T}{2}\right) \nonumber \\
    &\leq & \dfrac{1}{2}\int_0^{\frac{T}{2}} \left[\|u^*(s)\|^2 + \|C\,y^*(s)-z\|^2\right] ds + V\left(y^*\left(\dfrac{T}{2}\right),\dfrac{T}{2}\right). \label{proof thm 1.1 limsup}
\end{eqnarray}
Using this time the DPP for $W$ (the first equality in \eqref{proof Thm 1.1 DPP W}), we can compute
$$
\dfrac{1}{2}\int_0^{\frac{T}{2}} \left[ \| u^*(s)\|^2 + \|C\, y^*(s)-z\|^2\right] ds =  W(x) + \dfrac{T}{2}V_s - W\left(y^*\left(\frac{T}{2}\right)\right).
$$
And combining this identity with \eqref{proof thm 1.1 limsup}, we obtain
\begin{eqnarray*}
& &\dfrac{1}{2}\int_0^{\frac{T}{2}} \left[\|u_{_{T}} (s)\|^2 + \|C\,y_{_{T}}(s)-z\|^2\right] ds - \dfrac{T}{2}V_s \leq   W(x) - W\left(y^*\left(\dfrac{T}{2}\right)\right) \\
& &\qquad \qquad \qquad \qquad + V\left(y^*\left(\dfrac{T}{2}\right),\dfrac{T}{2}\right) - V\left(y_{_{T}}\left(\dfrac{T}{2}\right),\dfrac{T}{2}\right).
\end{eqnarray*}
This inequality, together with $W(\bar{y})=0$, the Lipschitz continuity of $V$ from Lemma \ref{lemma V lip estimate} and the fact that, by the turnpike property and Lemma \ref{lemma_rappr_infinity 1}, we have that $y_{_{T}}(T/2)$ and $y^*(T/2)$ converge to $\bar{y}$ as $T\to \infty$, gives
$$
\limsup_{T\to+\infty} \dfrac{1}{2}\int_{0}^{\frac{T}{2}}\left[\|u_{_{T}}(s)\|^{2} + \|C\, y_{_{T}}(s) - z\|^{2}  \right]\;ds-\dfrac{T}{2}V_s \leq W(x).
$$
From this inequality and \eqref{proof thm1.1 liminf}, it follows \eqref{proof thm1.1 first half convergence}.

Finally, combining \eqref{proof Thm 1.1 DPP}, \eqref{proof Thm1.1 lambda} and \eqref{proof thm1.1 first half convergence} we obtain
\begin{equation}\label{conv_value_function_identification}
    V(x,T)-TV_s\underset{T\to +\infty}{\longrightarrow}W(x)+\lambda.
\end{equation}

\textit{Step 2:} \textbf{Uniqueness for the ergodic equation.}

The proof that $(V_s,W(\cdot))$ satisfies the equation \eqref{eq: ergodic HJB_thm} can be carried out by standard methods in optimal control theory. 
In the case where the inclusion $U\subset \mathbb{R}^m$ is strict (i.e. when we have constraints on the control), the function $W$ is not expected to enjoy $C^{1}$ regularity, and one has to use the theory of viscosity solutions \cite{crandall1983viscosity,crandall1992user}. We omit the proof since it follows exactly the arguments in \cite[Thm. 5]{kouhkouh2018dynamic} (which is an adaptation of \cite[Thm. 7.2.4]{cannarsa2004semiconcave} to the LQ setting), dropping the dependency on time. 

In order to prove that $W(x)$ is the unique (up to an additive constant) viscosity solution to \eqref{eq: ergodic HJB_thm} bounded from below, we argue by contradiction.
Let $c\in\R$, and let $W_1\in C(\R^n)$ be a bounded from below continuous function satisfying the equation 
$$
c + H(x,\nabla W_1) = \ell (x)
$$
in the viscosity sense. Here, the Hamiltonian $H$ and the function $\ell$ are defined as in \eqref{intro - ham}.
Observe that the function given by
$$
V_1(x,T) = c \, T + W_1(x)
$$
is a viscosity solution to the problem \eqref{eq: HJ_mainth} with initial condition $g(x) = W_1(x)$, which is bounded from below.
We can then deduce that $V_1(x,T)$ is actually the value function associated to the optimal control problem \eqref{eq: linear ODE}--\eqref{eq: functional} with final cost $g(x)=W_1(x)$. 
And since $W_1(\cdot)$ is bounded from below, we can use the statement (i) in Theorem \ref{th_turnpikeHJB} to deduce that
$$
\lim_{T\to +\infty} V_1(x,T) - V_s\, T = W(x) + \lambda, \qquad \text{for all}\ x\in \R^n,
$$
for some $\lambda\in\R$ depending on the final cost $W_1(\cdot)$. 
Hence, using the definition of $V_1(x,T)$ we obtain
$$
\lim_{T\to +\infty} W_1(x) + (c-V_s)\, T = W(x) + \lambda,
\qquad \text{for all}\ x\in \R^n.
$$
This implies that $c=V_s$ and also that $W_1(x)-W(x) = \lambda$, for all $x\in \R^n$.
\end{proof}

Let us finish this section with an illustrative example that shows why the value function $V(x,T)$ is not in general differentiable.
As we will see, for a suitable nonconvex final cost $g$, the global minimizer for $J_{T,x}$ with $x=0$ and $T$ sufficiently large is not unique. This implies in particular that the subdifferential of $V(\cdot, T)$ contains more than one element and hence $V(\cdot,T)$ is not differentiable at $0$ for $T$ sufficiently large (see  \cite[Theorem 7.4.17]{cannarsa2004semiconcave}, and further examples can be found in \cite[page 200]{cannarsa2004semiconcave}). In general, It can be shown (see \cite[Theorem 1.5.3]{cannarsa2004semiconcave}) that there exists a finite time horizon during which the solution is smooth, but afterwards, it develops singularities (see \cite[Theorem 1.5.6]{cannarsa2004semiconcave}).

\begin{example}\label{lemma_nouniq}
    Let us consider the optimal control problem \eqref{eq: linear ODE}--\eqref{eq: functional} with the pair of matrices $(A,B)$ being controllable and $C$ being any matrix.
    As a final cost, we consider the function
    $$
    g_\varepsilon(x) = \dfrac{1}{\varepsilon} [\| x\|^4 - \|x\|^2],
    $$
    where $\varepsilon>0$ will be chosen later.
    
    Our goal is to show that
    if $\varepsilon>0$ sufficiently small, the functional
    \begin{equation}\label{eq: functional_nouniq}
    J_{T,0} (u) \coloneqq \dfrac{1}{2} \int_0^T \left[\| u(s)\|^2 + \| C \, y (s)\|^2\right] ds + g_\varepsilon (y(T)), 
    \end{equation}
    admits (at least) two distinguished global minimizers whenever $T>2$.
    
    Let us first prove that, if $\varepsilon>0$ is sufficiently small, then for any $T>1$, the control $u\equiv 0$ is not optimal.

    Fix $x_1$ a minimizer of the function $g:\mathbb{R}^n\longrightarrow \mathbb{R}$ defined as $g(x)\coloneqq \left\|x\right\|^4-\left\|x\right\|^2$ and set
    \begin{equation*}
	\tilde{u}(s)=\begin{cases}
	0 \quad &s \in \ (0,T-1)\\
	u_1(t-T+1) \quad &s\in \ (T-1,T),
	\end{cases}
	\end{equation*}
	where $u_1$ is any control solving the controllability problem
	\begin{equation*}
    \begin{array}{ll}
    \dot{y_1}(s) = A\, y_1(s) + B\, u_1(s), & s\in [0,1] \\
    y_1 (0) = 0, \ y_1 (1) = x_1.
    \end{array}
    \end{equation*}
    Let $\tilde{y}$ be the solution to \eqref{eq: linear ODE},
    with control $\tilde{u}$. Since $x=0$ with control $u=0$ is a stationary point of \eqref{eq: linear ODE}, by uniqueness of solution  
    we have
    \begin{equation*}
	\tilde{y}(s)=\begin{cases}
	0 \quad &s \in \ (0,T-1)\\
	y_1(t-T+1) \quad &s\in \ (T-1,T),
	\end{cases}
	\end{equation*}
    Let us now evaluate the functional $J_{T,0}$ defined in \eqref{eq: functional_nouniq} at $\tilde{u}$ and compare it with the control $u\equiv 0$. Since $\min_{\R^n} g(x)<0$, we have
    \begin{eqnarray*}
    J_{T,0} (\tilde{u})
    &=&\dfrac{1}{2} \int_0^1 \left[\| u_1(s)\|^2 + \| C \, y_1 (s)\|^2\right] ds + \frac{1}{\varepsilon}\left[\left\|x_1\right\|^4-\left\|x_1\right\|^2\right]\nonumber\\
    &=&\dfrac{1}{2} \int_0^1 \left[\| u_1(s)\|^2 + \| C \, y_1 (s)\|^2\right] ds + \frac{1}{\varepsilon}\min_{\mathbb{R}^n}g \nonumber \\
    &<&  0 = J_{T,0}(0) 
    \end{eqnarray*}
    for a sufficiently small $\varepsilon$. This means that $u\equiv 0$ is not a global minimizer of \eqref{eq: functional_nouniq}.
    
    Finally, since the final cost $g_\varepsilon$ and the running cost in \eqref{eq: functional_nouniq} are continuous and bounded from below, then by the Direct Method in the Calculus of Variations, there exists a minimizer $u_{_{T}}$ of \eqref{eq: functional_nouniq}. Moreover, we have that $u_{_{T}}\neq 0$ if $T>1$. And since the initial condition of the admissible trajectories is $0$, then if we denote by $y_{_{T}}$ the optimal trajectory corresponding to $u_{_{T}}$, we have $-y_{_{T}}$ is the trajectory corresponding to $-u_{_{T}}$. Now, by definition of \eqref{eq: functional_nouniq}, $J_{T,0} (-u_{_{T}})=J_{T,0} (u_{_{T}})=\min_{\mathcal{U}_T}J_{T,0}$, whence $u_{_{T}}$ and $-u_{_{T}}$ are two distinguished global minimizers of \eqref{eq: functional_nouniq}.
\end{example}

\section{Conclusions and open problems}
\label{sec:Conclusions and open problems}
	
In this manuscript, we have studied the long time behavior of the value function associated to a finite-dimensional linear-quadratic optimal control problem with any target $z$, a general terminal cost $g$ and constrained controls. To do so, we have introduced an infinite-time horizon optimal control problem and studied its value function $W(x)$. This allows us to provide an asymptotic decomposition of the value function $V(T,x)$ for the original control problem with finite time horizon and which is of the form $W(x) + V_s\, T + \lambda$, where each of the terms corresponds to the cost of the optimal trajectory during one of the three stages of the turnpike strategy. 

We now present some open problems.

\subsection{Control problems governed by nonlinear state equations}
\label{subsec:Conclusions and open problems_Control problems governed by nonlinear state equations}

We formulate this for a special control problem. Let $A$ be an $n\times n$ symmetric positive definite matrix and let $f:\mathbb{R}\longrightarrow \mathbb{R}$ be an increasing nonlinearity of class $C^1$ and with $f(0)=0$. For a given time horizon $T>0$, an initial state $x$ in $\R^n$ and a control $u\in \mathcal{U}_T\coloneqq L^2(0,T;U)$ the corresponding trajectory $y (\cdot)$ solves
\begin{equation*}\label{eq: semilinear ODE}
\begin{array}{ll}
y'(s) + A\, y(s) + f\left(y(s)\right)= B\, u(s), & \text{for} \ s\in [0,T] \\
y (0) = x,
\end{array}
\end{equation*}
where the control operator is given by the matrix $B\in \mathcal{M}_{n,m} (\R)$ and the nonlinear term $f\left(y(s\right))=\left(f\left(y_1(s)\right),\dots,f\left(y_n(s)\right)\right)$.

The optimal control problem is to minimize, over the admissible controls $u\in L^2(0,T;U)$, the cost functional $J_{T,x} (u) \coloneqq \dfrac{1}{2} \int_0^T \left[\| u(s)\|^2 + \| C \, y (s)-z\|^2\right] ds $
where $C\in \mathcal{M}_n(\R)$ is a given matrix and $z\in \R^n$ is the prescribed running target. The value function 
is defined as $V (x,T) \coloneqq \inf_{u\in \mathcal{U}_T}  J_{T,x}(u)$.
In the same line as for the LQ problem treated in this manuscript, one can also introduce  the steady functional $J_{s} \left(\overline{u},\overline{y}\right) \coloneqq \dfrac{1}{2} \left[\| \overline{u}\|^2 + \| C \, \overline{y}-z\|^2\right]$
to be minimized over the subset of controlled steady states $M_s\coloneqq \left\{\left(\overline{u},\overline{y}\right)\in U\times \mathbb{R}^n \ | \ A\overline{y}+f\left(\overline{y}\right)=B\overline{u}\right\}$ and define $V_s\coloneqq \min_{M_s}J_s$. These kind of problems have been treated both in a finite dimensional framework \cite{trelat2015turnpike} and in a PDE framework \cite{PZ2,trelat2018steady,trelat2018integral,pighin2020turnpike}. Available results in the literature typically require smallness conditions on the running target.

By using the techniques developed in the above references, it is possible to get bounds on the space derivatives of the value function, which allow to apply the Ascoli-Arzel\`a Theorem as in the proof of Theorem \ref{th_turnpikeHJB}. For small targets, by using the turnpike results of \cite{pighin2020turnpike} and adapting the techniques of the present manuscript, we can deduce large time asymptotics of the value function as in Theorem \ref{th_turnpikeHJB}. However, for large targets, to the best of our knowledge, the turnpike theory is not complete. In particular, we cannot identify the limit as we do in \eqref{conv_value_function_identification}, because we do not have a result like
\begin{equation*}
\label{eq: limit time average_semil}
    	\frac{1}{T}V (x,T)\underset{T\to +\infty}{\longrightarrow}V_s,
\end{equation*}
which identifies the limit of the time-average of the value function as the value function for the steady problem. Note that, by adapting the techniques of \cite[Lemma 2.1, page 12]{pighin2020turnpike}, it is possible to prove that $\limsup_{T\to +\infty}\frac{1}{T}V (x,T)\leq V_s.$
But we are not able to prove the converse inequality
\begin{equation}
\label{eq: liminfgeq time average}
    	\liminf_{T\to +\infty}\frac{1}{T}V (x,T)\geq V_s.
\end{equation}
From a control perspective, the above inequality means that in time large there is no time-evolving strategy significantly better than the steady ones. Actually, in case the time evolving functional is restricted to time independent controls, \eqref{eq: liminfgeq time average} has been proved in \cite[section 4]{PZ2}, by $\Gamma$-convergence. However,
to the best of our knowledge, the above inequality is
unknown if the time-evolution functional is minimized over time dependent controls and it is an interesting open problem.

\subsection{Complete turnpike theory in constrained control}

Throughout our manuscript, we assumed $A$ invertible. It would be nice to get a turnpike result under constraints, without this assumption.

Furthermore, the rate of convergence of the time-evolution optima towards the steady ones should be investigated. An exponential bound could be obtained by applying the arguments of \cite{esteve2020turnpike} to $\tilde{u}\coloneqq u-\overline{u}$ and $\tilde{y}\coloneqq y-\overline{y}$.

\subsection{Characterization of the ergodic constant in a more general case}
\label{subsec:Conclusions and open problems_Characterization of the ergodic constant in a more general case}

In our setting, the constant $c$ in \eqref{eq: stationary equation} corresponds to $V_s$, which is the minimal value of the steady problem. This has been obtained as a consequence of the validity of the turnpike property for our problem. It would be interesting to generalize this characterization for more general problems, by using Hamilton-Jacobi techniques instead of turnpike theory.


\subsection{Hamilton-Jacobi equations with non-coercive Hamiltonian}
\label{subsec:Conclusions and open problems_Revise Hamilton-Jacobi literature to include the case of lack of coercivity and lack of Lipschitz property}

As we have anticipated, the function $p \longmapsto H(x,p)$, as defined in \eqref{intro - ham}, is not coercive whenever $B^*$ has a nontrivial kernel. This prevented us from using available results in the Hamilton-Jacobi literature. We have then employed turnpike theory to obtain long time behavior results in our context. 

\subsection{The infinite dimensional case}
\label{subsec:Conclusions and open problems_Tunrpike in infinite dimension}
It is well known that the turnpike property holds as well in the infinite dimensional case (see \cite{porretta2013long,PZ2,trelat2018steady,trelat2018integral,pighin2020turnpike}). In this setting, one can still associate to the infinite dimensional optimal control problem an analogue of the HJB equation that captures the evolution of the value function. Indeed, this can be handled for instance by means of the so-called \textit{Master} equation whose characteristics are of HJB type. Such equation appears in the context of Mean Field Games and its long time behavior was studied for instance in \cite{cardaliaguet2019long}. We refer also to \cite{bensoussan2015master, bensoussan2017interpretation}.

\appendix
\section{Proof of the turnpike property}
\label{sec:Proof of the turnpike property}

This appendix is devoted to the proof of the turnpike property stated in Theorem \ref{th_TURNPIKE}.
The main difficulty resides in the fact that we are considering the constrained control case.
In addition, we do not make the assumption of the steady optimal control $\overline{u}$ being at the interior of the control set $U$, which would make the proof much easier, and moreover would allow us to prove turnpike with an exponential rate.

We start by proving the following crucial Lemma which is a direct consequence of \cite[Remark 2.1]{porretta2013long} and we include its proof for self-consistency.

\begin{lemma}\label{lemma_obs_conseq_dynamical}
Assume $(A,C)$ is detectable and take $f\in L^2(0,T;\mathbb{R}^n)$. Then, there exists a constant $K=K\left(A,C\right)\geq 0$, independent of $T$ and $f$, such that for any $T\geq 1$ and for any $y$ solution to
\begin{equation}\label{ODE_nohomo}
    \frac{d}{ds}y=Ay+f\hspace{2.8 cm}  \mbox{in} \hspace{0.10 cm}(0,T),
\end{equation}
we have
\begin{equation}\label{obs_cons_dyna}
    \|y(t)\|^2+\int_0^T\| y\|^2ds\leq K\left[\|y(0)\|^2+\int_0^T\|C\, y\|^2ds+\int_0^T\|f\|^2 ds\right],
\end{equation}
for any $t\in [0,T]$.
\end{lemma}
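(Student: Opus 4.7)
The plan is to use detectability of $(A,C)$ to reduce the estimate to a standard decay estimate for a stable matrix, then apply the variation of constants formula together with Cauchy--Schwarz and Young's convolution inequality.

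First, by detectability of $(A,C)$, there exists $L\in\mathcal{M}_{n}(\mathbb{R})$ such that the matrix $M\coloneqq A-LC$ is Hurwitz, i.e.\ all its eigenvalues have strictly negative real part. Consequently, there exist constants $K_{1}=K_{1}(A,C)\geq 0$ and $\mu=\mu(A,C)>0$, depending only on $A$ and $C$, such that $\|e^{Mt}\|\leq K_{1}e^{-\mu t}$ for every $t\geq 0$. Rewriting the ODE \eqref{ODE_nohomo} as
\begin{equation*}
\frac{d}{ds}y = My + LCy + f,
\end{equation*}
and applying the variation of constants formula, I obtain
\begin{equation*}
y(t) = e^{Mt}y(0) + \int_{0}^{t} e^{M(t-s)}\bigl(LC\,y(s) + f(s)\bigr)\,ds, \qquad t\in[0,T].
\end{equation*}

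Next, I estimate $\|y(t)\|^{2}$ pointwise. Using the triangle inequality, the bound $\|e^{Mt}\|\leq K_{1}e^{-\mu t}$, and the elementary inequality $(a+b+c)^{2}\leq 3(a^{2}+b^{2}+c^{2})$, I get
\begin{equation*}
\|y(t)\|^{2} \leq 3K_{1}^{2}\, e^{-2\mu t}\|y(0)\|^{2} + 3K_{1}^{2}\|L\|^{2}\!\left(\int_{0}^{t}\! e^{-\mu(t-s)}\|Cy(s)\|\,ds\right)^{\!2} + 3K_{1}^{2}\!\left(\int_{0}^{t}\! e^{-\mu(t-s)}\|f(s)\|\,ds\right)^{\!2}.
\end{equation*}
Then Cauchy--Schwarz applied to each convolution term yields, e.g.,
\begin{equation*}
\left(\int_{0}^{t} e^{-\mu(t-s)}\|Cy(s)\|\,ds\right)^{2} \leq \frac{1}{\mu}\int_{0}^{T}\|Cy(s)\|^{2}\,ds,
\end{equation*}
and analogously for the $f$-term, which gives the pointwise part of \eqref{obs_cons_dyna} with a constant depending only on $A$ and $C$.

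For the integral part $\int_{0}^{T}\|y\|^{2}\,ds$, I rely on Young's convolution inequality. Extending $LCy$ and $f$ by zero outside $[0,T]$ and $t\mapsto e^{Mt}$ by zero on $(-\infty,0)$, the representation $y = e^{M\cdot}y(0) + e^{M\cdot}*(LCy+f)$ combined with
\begin{equation*}
\int_{0}^{\infty}\|e^{Ms}\|\,ds \leq \frac{K_{1}}{\mu}, \qquad \int_{0}^{\infty}\|e^{Ms}\|^{2}\,ds \leq \frac{K_{1}^{2}}{2\mu},
\end{equation*}
yields, by $\|h*g\|_{L^{2}}\leq \|h\|_{L^{1}}\|g\|_{L^{2}}$ and the $L^{2}$-bound on $e^{M\cdot}y(0)$, the inequality
\begin{equation*}
\int_{0}^{T}\|y(s)\|^{2}\,ds \leq K\left[\|y(0)\|^{2} + \int_{0}^{T}\|Cy\|^{2}\,ds + \int_{0}^{T}\|f\|^{2}\,ds\right],
\end{equation*}
with $K=K(A,C)$ independent of $T\geq 1$ and $f$. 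Summing the two estimates gives \eqref{obs_cons_dyna}.

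I do not foresee a serious obstacle here: the only subtlety is making sure the constant $K$ is genuinely independent of $T$, which is guaranteed by the exponential decay of $e^{Mt}$ (so the $L^{1}$ and $L^{2}$ norms of $\|e^{Mt}\|$ on $[0,T]$ are uniformly bounded in $T$). The hypothesis $T\geq 1$ is harmless and only serves to absorb the value $\|y(t)\|^{2}$ at any point $t\in[0,T]$ into the constant.
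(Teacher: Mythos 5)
Your proof is correct, but it takes a genuinely different route from the one in the paper. You use the output-injection characterization of detectability: there exists a gain $L$ with $M=A-LC$ Hurwitz, so that rewriting the dynamics as $\frac{d}{ds}y=My+LCy+f$ and applying variation of constants reduces everything to convolution estimates against the exponentially decaying kernel $e^{Mt}$, handled by Cauchy--Schwarz (for the pointwise bound) and Young's inequality $\|h*g\|_{L^2}\leq\|h\|_{L^1}\|g\|_{L^2}$ (for the $L^2$ bound). The paper instead works with the modal definition of detectability: it splits $\mathbb{R}^n=\mathscr{L}^{-}(A)\oplus\mathscr{L}^{0+}(A)$ into the stable and antistable $A$-invariant subspaces, estimates the stable component directly from the decay of $e^{At}$ there, and for the antistable component first proves a unit-interval observability inequality $\|\tilde y\|_{L^\infty(0,1)}+\|\tilde y\|_{L^2(0,1)}\leq K\|C\tilde y\|_{L^2(0,1)}$ by equivalence of norms on the finite-dimensional subspace $\mathscr{L}^{0+}(A)$ (using that the unstable modes are observable), and then sums these estimates over unit subintervals of $[0,T]$ --- which is where the hypothesis $T\geq 1$ enters. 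Your argument is shorter, avoids the subinterval patching, and in fact yields the estimate for every $T>0$ (the restriction $T\geq 1$ is indeed not needed in your route, as you observe); it does rely on the standard fact that detectability is equivalent to the existence of a stabilizing observer gain $L=L(A,C)$, which should be stated or cited, and a canonical choice of $L$ should be fixed so that $K_1$ and $\mu$ depend only on $(A,C)$. The paper's argument is self-contained from the modal definition used elsewhere in the appendix and produces the unit-interval observability inequality \eqref{obs_ineq_01} as an intermediate tool. Both give a constant $K=K(A,C)$ independent of $T$ and $f$, so your proposal is a valid alternative proof of the lemma.
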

\begin{proof}
    In the present proof, $K$ will denote a (sufficiently large) constant depending only on $(A,C)$.\\
    \textit{Step 1} \ \textbf{Decomposition into stable and antistable part}\\
    Following the notation of \cite{callier1995convergence}, $\mathscr{L}^{-}(A)$ and $\mathscr{L}^{0+}(A)$ denote resp. the $A$-invariant subspaces of $\mathbb{R}^n$ spanned by the generalized eigenvectors of $A$ corresponding to eigenvalues $\lambda$ of $A$ such that $\mbox{Re}(\lambda)<0$ and $\mbox{Re}(\lambda)\geq 0$. By linear algebra,
	\begin{equation*}
	    \mathbb{R}^n=\mathscr{L}^{-}(A)\oplus \mathscr{L}^{0+}(A),
	\end{equation*}
	where $\oplus$ stands for the direct sum. Then, let $y$ be a solution to \eqref{ODE_nohomo}. Denote by $y_1$ and $y_2$ resp. the projections of $y$ onto $\mathscr{L}^{-}(A)$ and $\mathscr{L}^{0+}(A)$. Then, $y=y_1+y_2$ and, for $i=1,2$,
	\begin{equation*}
	    \frac{d}{ds}y_i=Ay_i+f_i\hspace{2.8 cm}  \mbox{in} \hspace{0.10 cm}(0,T),
	\end{equation*}
	where $f_1$ and $f_2$ stand for resp. the projection of $f$ onto $\mathscr{L}^{-}(A)$ and $\mathscr{L}^{0+}(A)$.\\
	\textit{Step 2} \ \textbf{Estimate for the asymptotically stable part}\\
	We have
	\begin{equation*}
	    \frac{d}{ds}y_1=Ay_1+f_1\hspace{2.8 cm}  \mbox{in} \hspace{0.10 cm}(0,T),
	\end{equation*}
	All the eigenvalues of $L_A\hspace{-0.1 cm}\restriction_{\mathscr{L}^{-}(A)}$ are strictly negative, where we have denoted by $L_A$ the linear operator associated to the matrix $A$. Then,
	we have, for any $s\in  [0,T]$
	\begin{equation}\label{estimate_stable}
	    \|y_1(s)\|+\int_0^T\| y_1\|^2ds\leq K\left[\|y_1(0)\|+\|f_1\|_{L^2(0,T;\mathbb{R}^n)}\right]\leq K\left[\|y(0)\|+\|f\|_{L^2(0,T;\mathbb{R}^n)}\right],
	\end{equation}
	the constant $K$ depending only on $A$.\\
	\textit{Step 3} \ \textbf{An observability inequality in the time interval $[0,1]$}\\
	To proceed with the antistable part, we shall first prove the existence of an observability constant $K=K(A,C)\geq 0$, such that for any $\tilde{y}\in H^1(0,T;\mathscr{L}^{0+}(A))$ solution to
	\begin{equation*}
	    \frac{d}{ds}\tilde{y}=A\tilde{y}\hspace{2.8 cm}  \mbox{in} \hspace{0.10 cm}(0,1),
	\end{equation*}
	we have
	\begin{equation}\label{obs_ineq_01}
	    \left\|\tilde{y}\right\|_{L^{\infty}(0,1)}+\left\|\tilde{y}\right\|_{L^{2}(0,1)}\leq K\left\|C\tilde{y}\right\|_{L^{2}(0,1)}.
	\end{equation}
	To that end, define
	\begin{equation*}
	    \left\|\cdot\right\|_{a}:\mathscr{L}^{0+}(A)\longrightarrow \mathbb{R}^+,\hspace{0.3 cm}\left\|x\right\|_{a}\coloneqq\left\|\tilde{y}_x\right\|_{L^{\infty}(0,1)}+\left\|\tilde{y}_x\right\|_{L^{2}(0,1)}
	\end{equation*}
	and
	\begin{equation*}
	    \left\|\cdot\right\|_{b}:\mathscr{L}^{0+}(A)\longrightarrow \mathbb{R}^+,\hspace{0.3 cm}\left\|x\right\|_{b}\coloneqq\left\|C\tilde{y}_x\right\|_{L^{2}(0,1)},
	\end{equation*}
	where $\tilde{y}_x$ solves
	\begin{equation*}
    \begin{cases}
        \frac{d}{ds}\tilde{y}_x(s) = A\, \tilde{y}_x(s), &  s\in (0,1) \\
        \tilde{y}_x (0) = x.
    \end{cases}
    \end{equation*}
    Now, using that, being $(A,C)$ is detectable, all the modes in $\mathscr{L}^{0+}(A)$ are observable (see definition of detectability in \cite[at the bottom of page 232]{callier1995convergence}), we deduce that both $\left\|\cdot\right\|_{a}$ and $\left\|\cdot\right\|_{b}$ are norms on the subspace $\mathscr{L}^{0+}(A)$. Since $\mathscr{L}^{0+}(A)$ is finite dimensional, they are equivelent, whence \eqref{obs_ineq_01} follows.\\
	\textit{Step 4} \ \textbf{Estimate for the antistable part}\\
	By definition
	\begin{equation*}\label{antistable_part}
	    \frac{d}{ds}y_2=Ay_2+f_2\hspace{2.8 cm}  \mbox{in} \hspace{0.10 cm}(0,T),
	\end{equation*}
	Consider an arbitrary interval $[a,b]\subset [0,T]$, with length $\left|b-a\right|=1$. By step 3, we have
	\begin{equation*}
        \left\|y_2\right\|_{L^{\infty}(a,b)}^2+\left\|y_2\right\|_{L^{2}(a,b)}^2\leq K\left[\int_{a}^b\|C\, y_2\|^2 ds+\int_{a}^{b}\|f_2\|^2 ds\right].
    \end{equation*}
    
    On the one hand, due to the arbitrariness of $[a,b]$, this yields
    \begin{equation*}
	    \left\|y_2\right\|_{L^{\infty}(0,T)}^2\leq K\left[\int_{0}^T\|C\, y_2\|^2 ds+\int_{0}^T\|f_2\|^2 ds\right].
	\end{equation*}
	
	On the other hand,
	\begin{eqnarray*}
        \int_0^T\left\|y_2\right\|^2ds&\leq&\sum_{i=0}^{\lfloor{T}\rfloor}\int_i^{i+1}\left\|y_2\right\|^2ds+\int_{T-1}^T\left\|y_2\right\|^2ds\nonumber\\
        &\leq&K\sum_{i=0}^{\lfloor{T}\rfloor}\left[\int_{i}^{i+1}\|C\, y_2\|^2 ds+\int_{i}^{i+1}\|f_2\|^2 ds\right]\nonumber\\
        &\;&+K\left[\int_{T-1}^{T}\|C\, y_2\|^2 ds+\int_{T-1}^{T}\|f_2\|^2 ds\right]\nonumber\\
        &\leq& K\left[\int_{0}^T\|C\, y_2\|^2 ds+\int_{0}^T\|f_2\|^2 ds\right].
    \end{eqnarray*}
    
	Then, for any $t\in [0,T]$, we have
	\begin{eqnarray}\label{estimate_antistable}
        \|y_2(t)\|^2+\int_0^T\left\|y_2\right\|^2ds&\leq&K\left[\int_{0}^T\|C\, y_2\|^2 ds+\int_{0}^T\|f_2\|^2 ds\right]\nonumber\\
        &\leq&K\left[\int_{0}^T\|C\, y_2\|^2 ds+\int_{0}^T\|f\|^2 ds\right]\nonumber\\
        &\leq&K\left[\int_{0}^T\|C\, y\|^2 ds+\int_{0}^T\|C\, y_1\|^2 ds+\int_{0}^T\|f\|^2 ds\right]\nonumber\\
        &\leq&K\left[\|y(0)\|^2+\int_{0}^T\|C\, y\|^2 ds+\int_{0}^T\|f\|^2 ds\right],
    \end{eqnarray}
	where in the last inequality we have employed \eqref{estimate_stable}.\\
	\textit{Step 4} \ \textbf{Conclusion}\\
	Putting together \eqref{estimate_stable} and \eqref{estimate_antistable}, we conclude.
\end{proof}

\begin{remark}\label{rmk: steady system - appendix}
Observe that, assuming that$(A,C)$ is detectable, we have, for some $\beta = \beta(A,C)>0$, the inequality
\begin{equation}\label{obs_steady}
\|y_s\|^2\leq \beta\left[\|Ay_s\|^2+\|C\, y_s\|^2\right], \hspace{1 cm}\forall y_s\in \mathbb{R}^n,
\end{equation}
This is a consequence of inequality \eqref{obs_cons_dyna} applied to the trajectory $\tilde{y}(t)\coloneqq ty_s$, (see \cite{porretta2013long}).
The steady inequality \eqref{obs_steady} yields strict convexity of $J_s$ and hence uniqueness of the minimizer for the stationary optimal control problem \eqref{eq: steady_functional}.
\end{remark}

\begin{remark}\label{rmk_stab}
    In the Definition \ref{definition_U_stab}, by using $u-u_1\in L^1(0,+\infty;\mathbb{R}^m)$ and $y-y_1\in L^1(0,+\infty;\mathbb{R}^n)$ together with $\frac{d}{ds}\left(y\left(s\right)-y_1\left(s\right)\right)=A\left(y\left(s\right)-y_1\left(s\right)\right)+B\left(u\left(s\right)-u_1\left(s\right)\right)$, $s\in(0,+\infty)$, 
    the solution $y$ stabilizes towards $y_1$, i.e. $y\left(t\right)-y_1\left(t\right)\underset{t\to +\infty}{\longrightarrow}0$. 
    Note also that $U$-stabilizability follows from exact controllability under the control constraint $u\left(t\right)\in U$ (see e.g. \cite{brammer1972controllability,LMF,TAT}).
    In case $U=\mathbb{R}^m$, the $U$-stabilizability is equivalent to (unconstrained) exponential stabilizability of $(A,B)$ \cite[Remark 2.2 page 24]{BRC}.
\end{remark}

 We now prove the following result, which provides an upper bound $\left\|y_{_{T}}\right\|$ uniform in $T$, and also gives the inequality \eqref{lemma_unif_boun_2_intro} from Theorem \ref{th_TURNPIKE}.
\begin{lemma}\label{lemma_unif_bound}
    There exists $K=K(A,B,C,U,x,z,g)$ such that, for any $T\geq 1$ and for every $t\in [0,T]$, we have
    \begin{equation}\label{lemma_unif_boun_1}
        \left\|y_{_{T}}\left(t\right)\right\|\leq K
    \end{equation}
    and
    \begin{equation}\label{lemma_unif_boun_2}
        \int_0^T \left[\| u_{_{T}}(s)-\overline{u}\|^2 + \| y_{_{T}}(s)-\overline{y}\|^2\right] ds\leq K
    \end{equation}
\end{lemma}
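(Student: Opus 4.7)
The strategy is to close a loop between the quadratic lower bound \eqref{ineq} and the observability-type estimate \eqref{obs_cons_dyna}, where the $U$-stabilizability hypothesis produces a $T$-uniform upper bound on $V(x,T)-TV_s$ that anchors the argument. Once $\|y_T(T)\|$ is controlled, the integral estimate \eqref{lemma_unif_boun_2} follows directly, and the pointwise estimate \eqref{lemma_unif_boun_1} comes from a second application of Lemma \ref{lemma_obs_conseq_dynamical} at an arbitrary time $t$.

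The first step is to construct a competitor. Since $(\bar u,\bar y)\in M_s$, the constant trajectory $y_1\equiv \bar y$ associated with the constant control $u_1\equiv \bar u$ satisfies \eqref{target_controlled_ODE}, so the $U$-stabilizability from $x$ to this target yields $\tilde u\in L^2_{\mathrm{loc}}(0,+\infty;U)$ whose associated trajectory $\tilde y$ satisfies $\tilde u-\bar u, \tilde y-\bar y\in L^1\cap L^2$, with norms bounded by $K\|x-\bar y\|$. By Remark \ref{rmk_stab}, $\tilde y$ is globally bounded and converges to $\bar y$, so $g(\tilde y(T))$ is uniformly bounded because $g$ is locally Lipschitz and bounded from below. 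Plugging $\tilde u$ into the identity \eqref{eq}, the quadratic integral is controlled by the $L^2$ norms and the affine integral (involving $\bar u$ and $C\bar y-z$) by the $L^1$ norms. This yields $J_{T,x}(\tilde u)\leq TV_s+C_0$ with $C_0=C_0(A,B,C,U,x,z,g)$ independent of $T$, and by optimality $J_{T,x}(u_T)\leq TV_s+C_0$ as well.

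The second step combines this with the lower bound \eqref{ineq} applied to $u_T$, using $g\geq g_{\min}$ and Cauchy--Schwarz on $(\bar p,x-y_T(T))$, to obtain
\begin{equation*}
\tfrac{1}{2}\int_0^T\!\bigl[\|u_T-\bar u\|^2+\|C(y_T-\bar y)\|^2\bigr]\,ds\;\leq\; C_1+\|\bar p\|\,\|y_T(T)-\bar y\|,
\end{equation*}
for some $C_1=C_1(A,B,C,U,x,z,g)$. Now I apply Lemma \ref{lemma_obs_conseq_dynamical} to $\tilde z := y_T-\bar y$, which solves $\dot{\tilde z}=A\tilde z+B(u_T-\bar u)$ with $\tilde z(0)=x-\bar y$; taking $t=T$ in \eqref{obs_cons_dyna} gives
\begin{equation*}
\|y_T(T)-\bar y\|^2 \;\leq\; K_2\Bigl[\|x-\bar y\|^2+\int_0^T\!\|C(y_T-\bar y)\|^2\,ds+\|B\|^2\!\int_0^T\!\|u_T-\bar u\|^2\,ds\Bigr].
\end{equation*}
Substituting the first inequality into the second, the scalar $R:=\|y_T(T)-\bar y\|$ satisfies a $T$-uniform quadratic inequality of the form $R^2\leq K_3+K_4 R$, which forces $R$ to be bounded by a constant independent of $T$. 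Reinserting this into the first inequality yields \eqref{lemma_unif_boun_2} for $\int_0^T\|u_T-\bar u\|^2 ds$ and $\int_0^T\|C(y_T-\bar y)\|^2 ds$, and the integral form of \eqref{obs_cons_dyna} then upgrades the latter to a uniform bound on $\int_0^T\|y_T-\bar y\|^2 ds$, as required by \eqref{lemma_unif_boun_2}. Finally, applying \eqref{obs_cons_dyna} pointwise in $t\in[0,T]$ — whose right-hand side is by now uniformly controlled — produces \eqref{lemma_unif_boun_1}.

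The main obstacle is the circular dependence introduced by the boundary term $(\bar p,x-y_T(T))$ in \eqref{ineq}: the lower bound controls the quadratic cost only modulo the free endpoint $y_T(T)$, while the observability estimate controls $y_T(T)$ only modulo that quadratic cost. Breaking this loop by viewing the combined inequality as a scalar quadratic $R^2-K_4R\leq K_3$ and absorbing the linear term is the crux of the proof; the remaining steps amount to bookkeeping and to citing the already-established lemmas.
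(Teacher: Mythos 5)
Your proposal is correct and follows essentially the same route as the paper: the upper bound $J_{T,x}(u_{_T})-TV_s\leq K$ via a $U$-stabilizing competitor, the lower bound \eqref{ineq} with the boundary term $(\overline{p},x-y_{_T}(T))$, and Lemma \ref{lemma_obs_conseq_dynamical} to close the loop on $\|y_{_T}(T)-\overline{y}\|$. The only cosmetic difference is that you absorb the linear term by solving the scalar quadratic inequality $R^2\leq K_3+K_4R$, whereas the paper absorbs it inline via Young's inequality to reach $\frac{\alpha}{2}R^2-K\leq K$; these are the same estimate.
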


\begin{proof}[Proof of Lemma \ref{lemma_unif_bound}]
    By Lemma \ref{lemma_obs_conseq_dynamical} applied to $y-\overline{y}$, we have
    \begin{equation*}
        \|y_{_{T}}(t)-\overline{y}\|^2\leq K\left[\|x-\overline{y}\|^2+\int_0^T \left[\| u_{_{T}}(s)-\overline{u}\|^2 + \| C \, \left(y_{_{T}}(s)-\overline{y}\right)\|^2\right] ds\right],
    \end{equation*}
    whence 
    \begin{equation*}
        \dfrac{1}{2} \int_0^T \left[\| u_{_{T}}(s)-\overline{u}\|^2 + \| C \, \left(y_{_{T}}(s)-\overline{y}\right)\|^2\right] ds\geq \alpha \|y_{_{T}}(s)-\overline{y}\|^2-K,
    \end{equation*}
    where $\alpha = \alpha(A,C)>0$ and $K=K(A,B,C,x,z)\geq 0$. Using the above inequality, together with Lemma \ref{lemma_rappr_lower_bound} (inequality \eqref{ineq}), yields
    
    \begin{eqnarray}\label{ineq_low}
    J_{T,x}(u_{_{T}})-TV_s&=& \int_{0}^{T}\left[\dfrac{1}{2}\|u(s)\|^{2} + \dfrac{1}{2}\|C\, y_{_{T}}(s) - z\|^{2} - V_s \right]\;ds\nonumber\\
    &\geq&\left[\dfrac{1}{2} \int_0^T \left[\| u_{_{T}}(s)-\overline{u}\|^2 + \| C \, \left(y_{_{T}}(s)-\overline{y}\right)\|^2\right] ds\right.\nonumber\\
    &\;&\quad\quad\quad +(\overline{p},x-y_{_{T}}(T))_{\mathbb{R}^n}+ g(y_{_{T}}(T))\bigg]\nonumber\\
    &\geq&\left[\dfrac{1}{2} \int_0^T \left[\| u_{_{T}}(s)-\overline{u}\|^2 + \| C \, \left(y_{_{T}}(s)-\overline{y}\right)\|^2\right] ds\right.\nonumber\\
    &\;&\quad\quad\quad -K\left(1+\left\|y_{_{T}}(T)-\overline{y}\right\|\right)\bigg]\nonumber\\
    &\geq&\left[\alpha \|y_{_{T}}(T)-\overline{y}\|^2-K\left(\|y_{_{T}}(T)-\overline{y}\|+2\right)\right]\nonumber\\
    &\geq&\dfrac{\alpha}{2} \|y_{_{T}}(T)-\overline{y}\|^2-K.
    \end{eqnarray}
    Now, by $U$-stabilizability,
    there exists a control $\hat{u}\in L^2(0,+\infty;U)$, such that
    \begin{equation*}
        \hat{u}-\overline{u}\in L^2(0,+\infty; \mathbb{R}^m)\cap L^1(0,+\infty, \mathbb{R}^m),\hspace{0.3 cm}\hat{y}-\overline{y}\in L^2(0,+\infty;\mathbb{R}^n)\cap L^1(0,+\infty; \mathbb{R}^n),
    \end{equation*}
    where $\hat{y}$ is the solution to \eqref{eq: linear ODE}, with initial datum $x$ and control $\hat{u}$. Therefore
    \begin{eqnarray}\label{ineq_up}
    J_{T,x}(u_{_{T}})-TV_s&\leq&J_{T,x}(\hat{u})-TV_s\nonumber\\
    &=&\dfrac{1}{2} \int_0^T \left[\| \hat{u}(s)-\overline{u}\|^2 + \| C \, \left(\hat{y} (s)-\overline{y}\right)\|^2\right] ds\nonumber\\
    &\;& +\int_0^T \left[\left(\overline{u},\hat{u}(s)-\overline{u}\right)_{\mathbb{R}^m}+\left(C\overline{y}-z,\, C \left( \hat{y} (s)-\overline{y}\right)\right)_{\mathbb{R}^n}\right] ds+g(\hat{y}(T))\nonumber\\
    &\leq &K,
    \end{eqnarray}
    where $=K(A,B,C,U,x,z,g)$.
    Hence, putting together \eqref{ineq_low} and \eqref{ineq_up}, we get
    \begin{eqnarray}
    \dfrac{\alpha}{2} \|y_{_{T}}(T)-\overline{y}\|^2-K&\leq&J_{T,x}(u_{_{T}})-TV_s\nonumber\\
    &\leq&J_{T,x}(\hat{u})-TV_s\leq K,\nonumber
    \end{eqnarray}
    i.e. the desired boundedness for $y_{_{T}}$.
    
    Moreover, by inequality \eqref{ineq}, the boundedness from below of $g$ together with \eqref{ineq_up}, we have
    \begin{align*}
    \int_0^T \left[\| u_{_{T}}(s)-\overline{u}\|^2 + \| C \, \left(y_{_{T}}(s)-\overline{y}\right)\|^2\right] ds\leq& \;J_{T,x}(u_{_{T}})-TV_s\nonumber\\
    & \quad +(\overline{p},-x+y_{_{T}}(T))_{\mathbb{R}^n}-g\left(y_{_{T}}\left(T\right)\right)\nonumber\\
    \leq& \;K.\\
    \end{align*}
    Then, inequality \eqref{lemma_unif_boun_2} follows from an application of Lemma \ref{lemma_obs_conseq_dynamical}. This finishes the proof.
\end{proof}
We now prove the validity of the turnpike property.

\begin{proof}[Proof of Theorem \ref{th_TURNPIKE}]
    Inequality \eqref{lemma_unif_boun_2_intro} has already been proved in Lemma \ref{lemma_unif_bound}. It remains to prove \eqref{epsturnpike}.
Throughout this proof, $K$ will always denote a (sufficiently large) constant depending only on $A$, $B$, $C$, $U$, $x$, $z$ and $g$.
    
    By \eqref{lemma_unif_boun_2_intro}, for any $\eta\in (0,1)$, there exists $\zeta=\zeta(A,B,C,U,x,z,g,\eta)>0$ such that, for all $T>\zeta$, we have
	\begin{eqnarray}
        \frac{1}{\zeta}\int_0^{\zeta} \left[\| u_{_{T}}(s)-\overline{u}\|^2 + \| y_{_{T}}(s)-\overline{y}\|^2\right] ds&\leq&\frac{1}{\zeta}\int_0^T \left[\| u_{_{T}}(s)-\overline{u}\|^2 + \| y_{_{T}}(s)-\overline{y}\|^2\right] ds\nonumber\\
        &\leq &\frac{K}{\zeta}<\eta^2\nonumber
    \end{eqnarray}
	and
	\begin{eqnarray}
        \frac{1}{\zeta}\int_{T-\zeta}^{T} \left[\| u_{_{T}}(s)-\overline{u}\|^2 + \| y_{_{T}}(s)-\overline{y}\|^2\right] ds&\leq&\frac{1}{\zeta}\int_{0}^T \left[\| u_{_{T}}(s)-\overline{u}\|^2 + \| y_{_{T}}(s)-\overline{y}\|^2\right] ds\nonumber\\
        &\leq &\frac{K}{\zeta}<\eta^2.\nonumber
    \end{eqnarray}
    By Integral Mean Value Theorem, for any $T\geq 1+2\zeta$, there exist $t_{T,1}\in [0,\zeta]$ and $t_{T,2}\in \left[T-\zeta,T\right]$, such that
    \begin{equation}\label{y_t-bary_small_1}
        \left\|u_{_{T}}\left(t_{T,1}\right)-\overline{u}\right\|^2 + \left\| y_{_{T}}\left(t_{T,1}\right)-\overline{y}\right\|^2=\frac{1}{\zeta}\int_0^{\zeta} \left[\| u_{_{T}}(s)-\overline{u}\|^2 + \| y_{_{T}}(s)-\overline{y}\|^2\right] ds<\eta^2
    \end{equation}
    and
    \begin{equation}\label{y_t-bary_small_2}
        \left\|u_{_{T}}\left(t_{T,2}\right)-\overline{u}\right\|^2 + \left\| y_{_{T}}\left(t_{T,2}\right)-\overline{y}\right\|^2=\frac{1}{\zeta}\int_{T-\zeta}^{T} \left[\| u_{_{T}}(s)-\overline{u}\|^2 + \| y_{_{T}}(s)-\overline{y}\|^2\right] ds<\eta^2.
    \end{equation}
    
    By the $U$-stabilizability, there exists a control $\tilde{u}\in L^2(t_{T,1},+\infty;U)$, such that
    $$\tilde{u}-\overline{u}\in L^1(t_{T,1},+\infty; \mathbb{R}^m)\cap L^2(t_{T,1},+\infty; \mathbb{R}^m)$$ 
    and its associated trajectory $\tilde{y}$, solution to \eqref{eq: linear ODE} with initial condition $y_{_T}(t_{T,1})$ satisfies
    $$\tilde{y}-\overline{y}\in L^1(t_{T,1},+\infty;\mathbb{R}^n)\cap L^2(t_{T,1},+\infty;\mathbb{R}^n),$$ 
    with estimates
    \begin{equation}\label{u-baruandy-bary}
    \begin{array}{l}
        \left\|\tilde{u}-\overline{u}\right\|_{L^1(t_{T,1},+\infty)\cap L^2(t_{T,1},+\infty)} \leq \gamma\left\|y_{_{T}}\left(t_{T,1}\right)-\overline{y}\right\|,\\
         \noalign{\vskip 2mm}
        \left\|\tilde{y}-\overline{y}\right\|_{L^1(t_{T,1},+\infty)\cap L^2(t_{T,1},+\infty)}\leq \gamma\left\|y_{_{T}}\left(t_{T,1}\right)-\overline{y}\right\|
    \end{array}
    \end{equation}
    the constant $\gamma$ depending only on $\left(A,B,U\right)$. Set
    \begin{equation}\label{eq: linear ODE_36}
        \begin{cases}
            \dot{\hat{y}}(s) = A\, \hat{y}(s) + B\, \hat{u}(s), &  s\in \left(0,T\right) \\
            \hat{y}\left(0\right) = x,
        \end{cases} \quad \quad
	    \hat{u}\left(s\right)\coloneqq
	    \begin{cases}
	        u_{_{T}}\left(s\right) \quad & s\in \left(0,t_{T,1}\right)\\
	        \tilde{u}\left(s\right) \quad & s \in \left(t_{T,1},t_{T,2}\right)\\
	        u_{_{T}}(s) \quad &  s\in \left(t_{T,2},T\right).
	    \end{cases}
	\end{equation}
    By using \eqref{u-baruandy-bary} and \eqref{eq: linear ODE_36}, we get
    \begin{equation}\label{u-baruandy-bary_inf}
        \left\|\hat{y}\left(t_{T,2}\right)-\overline{y}\right\|\leq \gamma\left\|y_{_{T}}\left(t_{T,1}\right)-\overline{y}\right\|,
    \end{equation}
    with $\gamma=\gamma\left(A,B,U\right)$.
    
    Now, let us define the functional
    \begin{equation}\label{eq: functional_Q}
    Q(u) \coloneqq \dfrac{1}{2} \int_{t_{T,1}}^{t_{T,2}} \left[\| u(s)\|^2 + \| C \, y (s)-z\|^2\right] ds, 
    \end{equation}
    defined for any $u\in L^2\left(t_{T,1},t_{T,2};U\right)$, where $y(\cdot)$ is the solution to \eqref{eq: linear ODE} with control $u$ and initial condition $y(t_{T,1}) = y_{_T} (t_{T,1})$.
    
    Let us estimate from above the following quantity
    \begin{equation*}
       \Lambda := \frac{1}{2}\int_{t_{T,1}}^{t_{T,2}} \left[\| u_{_{T}}(s)-\overline{u}\|^2 + \| C \, \left(y_{_{T}}(s)-\overline{y}\right)\|^2\right] ds.
    \end{equation*}
    
    By adapting the techniques of Lemma \ref{lemma_rappr_lower_bound} to the functional $Q$ in \eqref{eq: functional_Q}, we obtain the analogous version of \eqref{ineq}, which reads as
    \begin{align*}
       \Lambda \leq Q(u_{_{T}})-\left(t_{T,2}-t_{T,1}\right)V_s-\left(\overline{p},y_{_{T}}\left(t_{T,1}\right)-y_{_{T}}\left(t_{T,2}\right)\right)_{\mathbb{R}^n}.
        \end{align*}
Now, using the definition of $J_{T,x}$ and $Q$, along with the fact that $u_{_T}$ minimizes $J_{T,x}$, we deduce
    \begin{align*}
         \Lambda \  \leq & \  J_{T,x}(u_{_{T}})-\int_{[0,t_{T,1}]\cup [t_{T,2},T]} \left[\| u_{_{T}}(s)\|^2 + \| C \, y_{_{T}} (s)-z\|^2\right] ds-g\left(y_{_{T}}(T)\right)\\
         & \ -\left(t_{T,2}-t_{T,1}\right)V_s-\left(\overline{p},y_{_{T}}\left(t_{T,1}\right)-y_{_{T}}\left(t_{T,2}\right)\right)_{\mathbb{R}^n} \\
         \leq & \ J_{T,x}(\hat{u})-\int_{[0,t_{T,1}]\cup [t_{T,2},T]} \left[\| u_{_{T}}(s)\|^2 + \| C \, y_{_{T}} (s)-z\|^2\right] ds-g\left(y_{_{T}}(T)\right) \\
         & \ -\left(t_{T,2}-t_{T,1}\right)V_s-\left(\overline{p},y_{_{T}}\left(t_{T,1}\right)-y_{_{T}}\left(t_{T,2}\right)\right)_{\mathbb{R}^n}.
    \end{align*}
       Using the defintion of $Q$ in \eqref{eq: functional_Q} and the choice of the control $\hat{u}$ in \eqref{eq: linear ODE_36}, we get
        \begin{align}
       \Lambda \ \leq &  \  Q(\hat{u})+\int_{t_{T,2}}^T \left[\| C \, \hat{y} (s)-z\|^2 - \| C \, y_{_{T}} (s)-z\|^2\right] ds \nonumber \\
        &\ +g\left(\hat{y}(T)\right)-g\left(y_{_{T}}(T)\right) -\left(t_{T,2}-t_{T,1}\right)V_s-\left(\overline{p},y_{_{T}}\left(t_{T,1}\right)-y_{_{T}}\left(t_{T,2}\right)\right)_{\mathbb{R}^n}. \label{inequality Lambda 1}
        \end{align}

Now, noting that $u_{_T}$ and $\hat{u}$ coincide in the interval $(t_{T,2} , T)$, and that $T-t_{T,2} \leq \zeta$, we can use Gronwall's inequality to estimate
$$
\| \hat{y}(T) - y_{_T}(T)\| \leq C \| \hat{y} (t_{T,2}) - y_{_T} (t_{T,2}) \|,
$$
where $C$ is independent of $T$.
Moreover, using the local Lipschitz continuity of $g$ and \eqref{lemma_unif_boun_1}, we obtain
\begin{align}\label{estimate g}
|g\left(\hat{y}(T)\right)-g\left(y_{_{T}}(T)\right) | \leq K \| \hat{y} (t_{T,2}) - y_{_T} (t_{T,2}) \|.
\end{align}
        
Then, from the estimate \eqref{inequality Lambda 1}, an analogous version of the identity \eqref{eq} for the functional $Q$, combined with  \eqref{inequality Lambda 1},  \eqref{u-baruandy-bary} and \eqref{u-baruandy-bary_inf}, yields
        \begin{align*}
        \Lambda \  \leq & \  \int_{t_{T,1}}^{t_{T,2}} \left[\| \hat{u}(s)-\overline{u}\|^2 + \| C \, \left(\hat{y}(s)-\overline{y}\right)\|^2\right] ds \\
        & \  +\int_{t_{T,1}}^{t_{T,2}} \left[\left(\overline{u},\hat{u}(s)-\overline{u}\right)_{\mathbb{R}^m}+\left(C\overline{y}-z,\, C \left( \hat{y} (s)-\overline{y}\right)\right)_{\mathbb{R}^n}\right] ds\\
        &\  -\left(\overline{p},y_{_{T}}\left(t_{T,1}\right)-y_{_{T}}\left(t_{T,2}\right)\right)_{\mathbb{R}^n}+K\left[\left\|y_{_{T}}\left(t_{T,1}\right)-\overline{y}\right\|+\left\|y_{_{T}}\left(t_{T,2}\right)-\overline{y}\right\|\right]\\
         \leq & \  K\left[\left\|y_{_{T}}\left(t_{T,1}\right)-\overline{y}\right\|+\left\|y_{_{T}}\left(t_{T,2}\right)-\overline{y}\right\|\right],
    \end{align*}
  where $K$ is independent of $T$.

    Then, by Lemma \ref{lemma_obs_conseq_dynamical}, for any $s\in \left[t_{T,1},t_{T,2}\right]$,
    \begin{align}
        \|y_{_{T}}(s)-\overline{y}\|^2&\leq K\left\{\|y_{_{T}}\left(t_{T,1}\right)-\overline{y}\|^2+\int_{t_{T,1}}^{t_{T,2}} \left[\| u_{_{T}}(s)-\overline{u}\|^2 + \| C \,\left(y_{_{T}}(s)-\overline{y}\right)\|^2\right] ds\right\}\nonumber\\
        &\leq K\left[\left\|y_{_{T}}\left(t_{T,1}\right)-\overline{y}\right\|+\left\|y_{_{T}}\left(t_{T,2}\right)-\overline{y}\right\|\right]\leq K\eta,\nonumber
    \end{align}
    Finally, for any $\varepsilon >0$, setting $\eta =\frac{\varepsilon^2}{K}$ and $\tau(A,B,C,U,x,z,g,\varepsilon)=\zeta(A,B,C,U,x,z,g,\eta)$, we get the thesis.
\end{proof}

\section{Riccati theory and proof of Proposition \ref{prop_rappr_riccati}}
\label{sec:Riccati theory}



Although the proofs of our main results (Theorems \ref{th_TURNPIKE} and \ref{th_turnpikeHJB}) do not rely on the use of the classical Riccati theory, which  is not applicable to our case due to the constraints on the control, we note that in the unconstrained case $U=\mathbb{R}^m$, we may use the Riccati theory to obtain the value function $W(x)$ explicitly as a positively definite quadratic form. We recall that, following Theorem \ref{th_turnpikeHJB}, the value function $W(x)$ is the limiting profile of the asymptotic decomposition of the value function $V(T,x)$.

The proof of Proposition \ref{prop_rappr_riccati} is based on the following well-known Lemma, concerning the properties of the Algebraic Riccati Equation, and the corresponding Hamiltonian matrix
\begin{equation*}
\mbox{Ham}\coloneqq \begin{bmatrix}
A&-BB^*\\
-C^*C&-A^*
\end{bmatrix}.
\end{equation*}
One can realize that $\mbox{Ham}$ is the associated matrix to the optimality system \eqref{steady_OS_2_app}.

\begin{lemma}\label{lemma_iperbolicity}
	Assume $(A,B)$ is stabilizable and $(A,C)$ is detectable. Then,
	\begin{enumerate}
		\item there exists a unique symmetric positive semidefinite  solution to the Algebraic Riccati Equation
		\begin{equation}\label{ARE}
		-\widehat{E}A-A^*\widehat{E}+\widehat{E}BB^*\widehat{E}=C^*C \hspace{1 cm} \mbox{(ARE)}
		\end{equation}
		such that $A-BB^*\widehat{E}$ is stable, i.e. the real part of the spectrum $\mbox{Re}(\sigma(A-BB^*\widehat{E}))\subset (-\infty,0)$;
		\item set
		\begin{equation}\label{Lambda}
				\Lambda\coloneqq
				\begin{bmatrix}
				I_n&S\\
				\widehat{E}&\widehat{E}S+I_n
				\end{bmatrix},
		\end{equation}
		where $S$ is solution to the Lyapunov equation
		\begin{equation*}
		S(A-BB^*\widehat{E})^*+(A-BB^*\widehat{E})S=BB^*.
		\end{equation*}
		Then, $\Lambda$ is invertible and
		\begin{equation*}
		\Lambda^{-1}\mbox{Ham} \hspace{0.1 cm} \Lambda=\begin{bmatrix}
		A-BB^*\widehat{E}&0\\
		0&-(A-BB^*\widehat{E})^*.
		\end{bmatrix}
		\end{equation*}
		As a consequence, $\mbox{Ham}$ is invertible and its spectrum does not intersect the imaginary axis.
	\end{enumerate}
\end{lemma}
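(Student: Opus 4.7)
The plan is to establish part (1) via the classical infinite-horizon linear quadratic regulator (LQR) problem, and then prove part (2) by a direct block-matrix computation that uses both the ARE and the Lyapunov equation defining $S$.

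For part (1), I would consider the auxiliary unconstrained LQR problem of minimizing $\tfrac{1}{2}\int_0^\infty [\|u(s)\|^2 + \|Cy(s)\|^2]\,ds$ subject to $\dot y = Ay + Bu$, $y(0)=x$. Stabilizability of $(A,B)$ guarantees the set of admissible controls making the cost finite is nonempty for every $x$, and a standard convexity/weak-compactness argument in $L^2(0,\infty;\mathbb{R}^m)$ gives existence of a minimizer. Since the functional is quadratic and the dynamics linear, the value function is a nonnegative quadratic form $\tfrac{1}{2}x^*\widehat E x$ with $\widehat E$ symmetric positive semidefinite. Plugging the resulting optimal feedback $u^*(s)=-B^*\widehat E y^*(s)$ into the Dynamic Programming equation yields the ARE \eqref{ARE}. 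Detectability of $(A,C)$ is then what ensures that $A-BB^*\widehat E$ is actually stable: if some trajectory $y^*$ did not decay, the observable part $Cy^*$ would not belong to $L^2$, contradicting the finiteness of the optimal cost. Uniqueness among positive semidefinite solutions with the stabilizing property follows from a standard argument: if $\widehat E_1,\widehat E_2$ are two such solutions, subtract the two AREs to get a Lyapunov equation for $\widehat E_1-\widehat E_2$ driven by a stable matrix and conclude $\widehat E_1=\widehat E_2$.

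For part (2), writing $F\coloneqq A-BB^*\widehat E$, I would verify directly that
\begin{equation*}
\mbox{Ham}\cdot \Lambda=\Lambda\cdot \begin{bmatrix}F&0\\0&-F^*\end{bmatrix}.
\end{equation*}
Block-multiplying the left-hand side and using the ARE on the $(2,1)$ entry gives $-C^*C-A^*\widehat E=\widehat E F$. For the $(1,2)$ entry, the identity reduces to $FS-BB^*=-SF^*$, which is exactly the Lyapunov equation $FS+SF^*=BB^*$. For the $(2,2)$ entry, combining the ARE identity with the Lyapunov equation gives $\widehat E FS-A^*=-\widehat E SF^*-F^*$, as required. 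Invertibility of $\Lambda$ is immediate from the block formula $\det\Lambda=\det(I)\cdot\det(\widehat E S+I-\widehat E S)=1$. Consequently, $\mbox{Ham}$ is similar to a block-diagonal matrix with spectrum $\sigma(F)\cup\sigma(-F^*)$; since $F$ is stable by part (1), $-F^*$ is antistable, so neither block has purely imaginary eigenvalues, yielding the claim.

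The main obstacle, and the only genuinely nontrivial ingredient, is proving existence of a stabilizing positive semidefinite solution to the ARE in part (1). Solvability of the Lyapunov equation for $S$ in part (2) is free once $F$ is stable, because $F$ stable implies the Lyapunov operator $X\mapsto FX+XF^*$ is invertible on symmetric matrices. Two technical points deserve attention. First, in the minimization argument one needs coercivity in $u$ plus the detectability-based observability inequality (the continuous analogue of Remark \ref{rmk: steady system - appendix}) to control the state, so this step parallels the arguments already used in Lemma \ref{lemma_obs_conseq_dynamical}. Second, the uniqueness step requires care because the ARE admits in general multiple positive semidefinite solutions; selecting the one producing a stable closed-loop matrix is what makes the solution unique, and this is precisely the property encoded in the statement.
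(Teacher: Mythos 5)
Your proposal is correct, but note that the paper does not actually prove this lemma: it simply cites the literature (part (1) is attributed to Callier--Winkin and to Abou-Kandil et al., part (2) to the reference \texttt{[SMA]}, with only the explicit formula for $\Lambda^{-1}$ recorded in a footnote). What you have written is a faithful and essentially complete reconstruction of the standard arguments behind those citations: the variational/dynamic-programming construction of the stabilizing PSD solution for part (1), with detectability used exactly as in Lemma \ref{lemma_obs_conseq_dynamical} to upgrade $u^*\in L^2$ and $Cy^*\in L^2$ to decay of the closed-loop state, and the direct block verification of $\mathrm{Ham}\,\Lambda=\Lambda\,\mathrm{diag}(F,-F^*)$ for part (2), whose three nontrivial entries reduce precisely to the ARE, the Lyapunov equation, and their combination, as you computed; the determinant argument $\det\Lambda=\det(\widehat ES+I-\widehat ES)=1$ is also correct and consistent with the paper's footnoted inverse. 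Two small points to tighten. First, in the uniqueness step the difference $\Delta=\widehat E_1-\widehat E_2$ satisfies the Sylvester equation $F_1^*\Delta+\Delta F_2=0$ with the two (a priori different) closed-loop matrices $F_i=A-BB^*\widehat E_i$, not a single Lyapunov equation; the conclusion $\Delta=0$ still follows because $\sigma(F_1^*)\cap\sigma(-F_2)=\emptyset$ when both $F_i$ are Hurwitz, so this is only a matter of phrasing. Second, your existence argument implicitly uses a verification (completion-of-squares) step to identify the quadratic value function with a solution of the ARE and the optimal control with the feedback $-B^*\widehat Ey$; this is standard but worth stating, since it is what licenses the claim that non-decay of a closed-loop trajectory would contradict finiteness of the optimal cost.
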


The first part of the above Lemma is Riccati theory (see, for instance, \cite[Fact 1-(a) and Fact 1-(f)]{callier1995convergence} or \cite{abou2012matrix}). The second part\footnote{
	\begin{equation*}
	\Lambda^{-1}=\begin{bmatrix}
	I_n+S\widehat{E}&-S\\
	-\widehat{E}&I_n.
	\end{bmatrix}
	\end{equation*}
} is taken from \cite[subsection III.B]{SMA}. We are now ready to prove Proposition \ref{prop_rappr_riccati}.

\begin{proof}[Proof of Proposition \ref{prop_rappr_riccati}]
    First of all, let us show that the minimization of $J_{\infty,x}$ is equivalent to the minimization of
    \begin{equation*}\label{}
    \begin{array}{cccl}
        \widehat{J}_{\infty,x}: & L^2_{\mbox{\tiny{loc}}}(0,+\infty;\R^m) & \longrightarrow & \mathbb{R}\cup \left\{+\infty\right\} \\
            \noalign{\vskip 2mm}
     & u & \longmapsto & \displaystyle\dfrac{1}{2} \int_0^{\infty} \left[\| u(s)-\overline{u}\|^2 + \| C \, \left(y(s)-\overline{y}\right)\|^2\right] ds,
     \end{array}
    \end{equation*}
    where $y$ is the solution to \eqref{eq: linear ODE}, with initial datum $x$ and control $u$.
    Let us show this by proceeding as in the proof of \eqref{ineq}, and concluding with Lemma \ref{lemma_rappr_infinity 1}.
    We first consider the finite horizon cost functional with final cost $g=0$, that is
    \begin{eqnarray}\label{eq: value T and J_2_app}
    J_{T,x}(u)&=& \dfrac{1}{2} \int_0^T \left[\| u(s)-\overline{u}+\overline{u}\|^2 + \| C \, y (s)-C\overline{y}+C\overline{y}-z\|^2\right] ds \nonumber\\
    &=&\dfrac{T}{2} \left[\| \overline{u}\|^2 + \|  C \, \overline{y}-z\|^2\right] +\dfrac{1}{2} \int_0^T \left[\| u(s)-\overline{u}\|^2 + \| C \, \left(y (s)-\overline{y}\right)\|^2\right] ds\nonumber\\
    &\;& +\int_0^T \left[\left(\overline{u},u(s)-\overline{u}\right)_{\mathbb{R}^m}+\left(C\overline{y}-z,\, C \left( y (s)-\overline{y}\right)\right)_{\mathbb{R}^n}\right] ds\nonumber\\
    &=& T\, V_s  +\dfrac{1}{2} \int_0^T \left[\| u(s)-\overline{u}\|^2 + \| C \, \left(y (s)-\overline{y}\right)\|^2\right] ds\nonumber\\
    &\;& +\int_0^T \left[\left(\overline{u},u(s)-\overline{u}\right)_{\mathbb{R}^m}+\left(C\overline{y}-z,\, C \left( y (s)-\overline{y}\right)\right)_{\mathbb{R}^n}\right] ds.
    \end{eqnarray}
Hence, one has
\begin{eqnarray}
    \frac{1}{2}\int_{0}^{T}\left[\|u(s)\|^{2} + \|C\,y(s) - z\|^{2} - V_{s}\right] \,ds
    &=& \dfrac{1}{2} \int_0^T \left[\| u(s)-\overline{u}\|^2 + \| C \, \left(y (s)-\overline{y}\right)\|^2\right] ds\nonumber\\
    &\;& +\int_0^T \left[\left(\overline{u},u(s)-\overline{u}\right)_{\mathbb{R}^m}+\left(C\overline{y}-z,\, C \left( y (s)-\overline{y}\right)\right)_{\mathbb{R}^n}\right] ds.\nonumber
    \end{eqnarray}
Then we focus on the term
    \begin{equation}\label{term to be computed_app}
        \int_0^T\left(C\, \overline{y}-z,\,  C \left( y(s)-\overline{y}\right)\right)_{\mathbb{R}^n}ds.
    \end{equation}
We recall that the pair $(\overline{u},\overline{y})$ satisfies the steady optimality system which reads as
    \begin{equation}\label{steady_OS_2_app}
    \begin{cases}
    0=A\overline{y}-BB^*\overline{p}\\
    0=A^*\overline{p}+C^*(C\, \overline{y}-z),
    \end{cases}
    \end{equation}
    with $\overline{u}=-B^* \overline{p}$. On the other hand, the pairs $(u(\cdot),y(\cdot))$ and $(\overline{u},\overline{y})$ satisfy the equation in \eqref{eq: linear ODE}. Hence, we have
    \begin{equation}\label{difference_eq: linear ODE_app}
	\begin{cases}
	\frac{d}{ds}(y-\overline{y})=A(y-\overline{y})+B(u-\overline{u})\hspace{1 cm}& s\in  (0,T)\\
	y(0)-\overline{y}=x-\overline{y}.
	\end{cases}
	\end{equation}
Then, using \eqref{steady_OS_2_app} and \eqref{difference_eq: linear ODE_app} and taking into account that $y(0)=x$ and $\overline{u}=-B^* \overline{p}$, we can compute the term \eqref{term to be computed_app} as follows:
    \begin{eqnarray}\label{lemmaraprV(x,T)evo_eq6}
    \int_0^T\left(C\overline{y}-z,\, C \left( y(s)-\overline{y}\right)\right)_{\mathbb{R}^n}ds &=&\int_0^T\left(C^*\left(C\overline{y}-z\right),\, y(s)-\overline{y}\right)_{\mathbb{R}^n}ds\nonumber\\
    &=&-\int_0^T\left(\overline{p},\, A\left(y(s)-\overline{y}\right)\right)_{\mathbb{R}^n}ds\nonumber\\
    &=&-\int_0^T\left(\overline{p}, \frac{d}{ds}(y-\overline{y})-B(u-\overline{u})\right)_{\mathbb{R}^n}ds\nonumber\\
    &=&\left(\overline{p},y(0)-\overline{y}\right)_{\mathbb{R}^n}-\left(\overline{p},y(T)-\overline{y}\right)_{\mathbb{R}^n}\nonumber\\
    &\;& + \int_0^T  \left(B^*\overline{p},u(s)-\overline{u}\right)_{\mathbb{R}^m} ds\nonumber\\
    &=&\left(\overline{p},x-y(T)\right)_{\mathbb{R}^n} -\int_0^T \left(\overline{u},u(s)-\overline{u}\right)_{\mathbb{R}^m} ds.
    \end{eqnarray}
    Finally, the conclusion follows by combining \eqref{eq: value T and J_2_app} and \eqref{lemmaraprV(x,T)evo_eq6} and then letting $T\to +\infty$ since from Lemma \ref{lemma_rappr_infinity 1} one has $u-\overline{u}\in L^{2}(0,+\infty;\R^{m})$, $y-\overline{y}\in L^{2}(0,+\infty;\R^{n})$ and $y(T) \to \overline{y}$.
    
    By \cite[Theorem 3.7 pages 237-238]{kwakernaak1972linear}, there exists a unique minimizer $u^{*}$ for $\widehat{J}_{\infty,x}$, given by \eqref{opt_inf_1} and
    \begin{equation*}
        \inf_{L^2_{\mbox{\tiny{loc}}}(0,+\infty;\R^m)} \widehat{J}_{\infty,x}(u)=\dfrac{1}{2}\left(x-\overline{y}\right)^*\widehat{E}\left(x-\overline{y}\right),
    \end{equation*}
    whence, by \eqref{eq: W def} and \eqref{lemma_rappr_infinity_eq1} which is now an equality,
    \begin{equation*}
        W(x) =\inf_{L^2_{\mbox{\tiny{loc}}}(0,+\infty;\R^m)} \widehat{J}_{\infty,x}(u)+\left(\overline{p},x-\overline{y}\right)_{\mathbb{R}^n}=\dfrac{1}{2}\left(x-\overline{y}\right)^*\widehat{E}\left(x-\overline{y}\right)+(\overline{p},x-\overline{y})_{\mathbb{R}^n},
    \end{equation*}
    as desired.
\end{proof}

\subsection*{Acknowledgement}
The authors are grateful to the referees for numerous remarks and suggestions which helped improve the first version of the manuscript.

\bibliography{turnpikeandHJB_references}

\begin{thebibliography}{10}

\bibitem{abou2012matrix}
{\sc H.~Abou-Kandil, G.~Freiling, V.~Ionescu, and G.~Jank}, {\em Matrix
  {R}iccati Equations in Control and Systems Theory}, Systems \& Control:
  Foundations \& Applications, Birkh{\"a}user Basel, 2012.

\bibitem{anderson2007optimal}
{\sc B.~D. Anderson and J.~B. Moore}, {\em Optimal control: linear quadratic
  methods}, Courier Corporation, 2007.

\bibitem{angeli2011average}
{\sc D.~Angeli, R.~Amrit, and J.~B. Rawlings}, {\em On average performance and
  stability of economic model predictive control}, IEEE transactions on
  automatic control, 57 (2011), pp.~1615--1626.

\bibitem{arisawa1997ergodic}
{\sc M.~Arisawa}, {\em Ergodic problem for the {H}amilton-{J}acobi-{B}ellman
  equation. {I}. {E}xistence of the ergodic attractor}, in Annales de
  l'Institut Henri Poincare (C) Non Linear Analysis, vol.~14, Elsevier, 1997,
  pp.~415--438.

\bibitem{arisawa1998ergodic}
\leavevmode\vrule height 2pt depth -1.6pt width 23pt, {\em Ergodic problem for
  the {H}amilton-{J}acobi-{B}ellman equation. {II}}, in Annales de l'Institut
  Henri Poincare (C) Non Linear Analysis, vol.~15, Elsevier, 1998, pp.~1--24.

\bibitem{bardi2008optimal}
{\sc M.~Bardi and I.~Capuzzo-Dolcetta}, {\em Optimal control and viscosity
  solutions of {H}amilton-{J}acobi-{B}ellman equations}, Springer Science \&
  Business Media, 2008.

\bibitem{barles2019large}
{\sc G.~Barles, O.~Ley, T.-T. Nguyen, and T.~V. Phan}, {\em Large time behavior
  of unbounded solutions of first-order {H}amilton--{J}acobi equations in
  {$\mathbb{R}^N$}}, Asymptotic Analysis, 112 (2019), pp.~1--22.

\bibitem{barles2006ergodic}
{\sc G.~Barles and J.-M. Roquejoffre}, {\em Ergodic type problems and large
  time behaviour of unbounded solutions of {H}amilton--{J}acobi equations},
  Communications in Partial Differential Equations, 31 (2006), pp.~1209--1225.

\bibitem{barles2000large}
{\sc G.~Barles and P.~E. Souganidis}, {\em On the large time behavior of
  solutions of {H}amilton--{J}acobi equations}, SIAM Journal on Mathematical
  Analysis, 31 (2000), pp.~925--939.

\bibitem{BRC}
{\sc A.~Bensoussan, G.~Da~Prato, M.~Delfour, and S.~Mitter}, {\em
  Representation and Control of Infinite Dimensional Systems}, Systems \&
  Control: Foundations \& Applications, Birkh{\"a}user Boston, 2006.

\bibitem{bensoussan2015master}
{\sc A.~Bensoussan, J.~Frehse, and S.~C.~P. Yam}, {\em The master equation in
  mean field theory}, Journal de Math{\'e}matiques Pures et Appliqu{\'e}es, 103
  (2015), pp.~1441--1474.

\bibitem{bensoussan2017interpretation}
\leavevmode\vrule height 2pt depth -1.6pt width 23pt, {\em On the
  interpretation of the master equation}, Stochastic Processes and their
  Applications, 127 (2017), pp.~2093--2137.

\bibitem{brammer1972controllability}
{\sc R.~F. Brammer}, {\em Controllability in linear autonomous systems with
  positive controllers}, SIAM Journal on Control, 10 (1972), pp.~339--353.

\bibitem{breiten2020turnpike}
{\sc T.~Breiten and L.~Pfeiffer}, {\em On the turnpike property and the
  receding-horizon method for linear-quadratic optimal control problems}, SIAM
  Journal on Control and Optimization, 58 (2020), pp.~1077--1102.

\bibitem{callier1995convergence}
{\sc F.~M. Callier and J.~Winkin}, {\em Convergence of the time-invariant
  {R}iccati differential equation towards its strong solution for stabilizable
  systems}, Journal of mathematical analysis and applications, 192 (1995),
  pp.~230--257.

\bibitem{cannarsa2004semiconcave}
{\sc P.~Cannarsa and C.~Sinestrari}, {\em Semiconcave functions,
  {H}amilton-{J}acobi equations, and optimal control}, vol.~58, Springer
  Science \& Business Media, 2004.

\bibitem{cardaliaguet2019long}
{\sc P.~Cardaliaguet and A.~Porretta}, {\em Long time behavior of the master
  equation in mean field game theory}, Analysis \& PDE, 12 (2019),
  pp.~1397--1453.

\bibitem{crandall1992user}
{\sc M.~G. Crandall, H.~Ishii, and P.-L. Lions}, {\em User’s guide to
  viscosity solutions of second order partial differential equations}, Bulletin
  of the American mathematical society, 27 (1992), pp.~1--67.

\bibitem{crandall1983viscosity}
{\sc M.~G. Crandall and P.-L. Lions}, {\em Viscosity solutions of
  {H}amilton-{J}acobi equations}, Transactions of the American mathematical
  society, 277 (1983), pp.~1--42.

\bibitem{esteve2020turnpike}
{\sc C.~Esteve, B.~Geshkovski, D.~Pighin, and E.~Zuazua}, {\em Turnpike in
  lipschitz-nonlinear optimal control}, arXiv preprint arXiv:2011.11091,
  (2020).

\bibitem{evans2010partial}
{\sc L.~C. Evans}, {\em Partial differential equations}, vol.~19, American
  Mathematical Soc., 2010.

\bibitem{faulwasser2020continuous}
{\sc T.~Faulwasser and C.~M. Kellett}, {\em On continuous-time infinite horizon
  optimal control--dissipativity, stability and transversality}, arXiv preprint
  arXiv:2001.09601,  (2020).

\bibitem{fujita2006asymptotic}
{\sc Y.~Fujita, H.~Ishii, and P.~Loreti}, {\em Asymptotic solutions of
  {H}amilton-{J}acobi equations in euclidean n space}, Indiana University
  mathematics journal,  (2006), pp.~1671--1700.

\bibitem{grune2016approximation}
{\sc L.~Gr{\"u}ne}, {\em Approximation properties of receding horizon optimal
  control}, Jahresbericht der Deutschen Mathematiker-Vereinigung, 118 (2016),
  pp.~3--37.

\bibitem{grune2021dissipativity}
\leavevmode\vrule height 2pt depth -1.6pt width 23pt, {\em Dissipativity and
  optimal control}, arXiv preprint arXiv:2101.12606,  (2021).

\bibitem{grune2018turnpike}
{\sc L.~Gr{\"u}ne and R.~Guglielmi}, {\em Turnpike properties and strict
  dissipativity for discrete time linear quadratic optimal control problems},
  SIAM Journal on Control and Optimization, 56 (2018), pp.~1282--1302.

\bibitem{grune2021relation}
\leavevmode\vrule height 2pt depth -1.6pt width 23pt, {\em On the relation
  between turnpike properties and dissipativity for continuous time linear
  quadratic optimal control problems}, Mathematical Control \& Related Fields,
  11 (2021), p.~169.

\bibitem{grune2016relation}
{\sc L.~Gr{\"u}ne and M.~A. M{\"u}ller}, {\em On the relation between strict
  dissipativity and turnpike properties}, Systems \& Control Letters, 90
  (2016), pp.~45--53.

\bibitem{grune2017nonlinear}
{\sc L.~Gr{\"u}ne and J.~Pannek}, {\em Nonlinear model predictive control}, in
  Nonlinear Model Predictive Control, Springer, 2017, pp.~45--69.

\bibitem{ishii2006asymptotic}
{\sc H.~Ishii}, {\em Asymptotic solutions for large time of {H}amilton-{J}acobi
  equations}, in International Congress of Mathematicians, vol.~3, 2006,
  pp.~213--227.

\bibitem{ishii2008asymptotic}
\leavevmode\vrule height 2pt depth -1.6pt width 23pt, {\em Asymptotic solutions
  for large time of {H}amilton-{J}acobi equations in euclidean $ n $ space}, in
  Annales de l'IHP Analyse non lin{\'e}aire, vol.~25, 2008, pp.~231--266.

\bibitem{ishii2013short}
\leavevmode\vrule height 2pt depth -1.6pt width 23pt, {\em A short introduction
  to viscosity solutions and the large time behavior of solutions of
  {H}amilton--{J}acobi equations}, in {H}amilton-{J}acobi equations:
  approximations, numerical analysis and applications, Springer, 2013,
  pp.~111--249.

\bibitem{kouhkouh2018dynamic}
{\sc H.~Kouhkouh}, {\em Dynamic programming interpretation of turnpike and
  {H}amilton-{J}acobi-{B}ellman equation}, Master thesis, Paris-Saclay
  University,  (2018).
\newblock Available online: http://bit.ly/2R7soRx.

\bibitem{kwakernaak1972linear}
{\sc H.~Kwakernaak and R.~Sivan}, {\em Linear optimal control systems}, vol.~1,
  Wiley-interscience New York, 1972.

\bibitem{LMF}
{\sc E.~B. Lee and L.~Markus}, {\em Foundations of optimal control theory},
  Robert E. Krieger Publishing Company, 1967.

\bibitem{lions1982generalized}
{\sc P.-L. Lions}, {\em Generalized solutions of {H}amilton-{J}acobi
  equations}, vol.~69, London Pitman, 1982.

\bibitem{pighin2020nonuniqueness}
{\sc D.~Pighin}, {\em Nonuniqueness of minimizers for semilinear optimal
  control problems}, arXiv preprint:2002.04485,  (2020).

\bibitem{pighin2020turnpike}
\leavevmode\vrule height 2pt depth -1.6pt width 23pt, {\em The turnpike
  property in semilinear control}, arXiv:2004.03269,  (2020).

\bibitem{pighin2020turnpike2}
{\sc D.~Pighin and N.~Sakamoto}, {\em The turnpike with lack of observability},
  arXiv preprint:2007.14081,  (2020).

\bibitem{porretta2013long}
{\sc A.~Porretta and E.~Zuazua}, {\em Long time versus steady state optimal
  control}, SIAM J. Control Optim., 51 (2013), pp.~4242--4273.

\bibitem{PZ2}
\leavevmode\vrule height 2pt depth -1.6pt width 23pt, {\em Remarks on long time
  versus steady state optimal control}, in Mathematical Paradigms of Climate
  Science, Springer, 2016, pp.~67--89.

\bibitem{roquejoffre2001convergence}
{\sc J.-M. Roquejoffre}, {\em Convergence to steady states or periodic
  solutions in a class of {H}amilton--{J}acobi equations}, Journal de
  math{\'e}matiques pures et appliqu{\'e}es, 80 (2001), pp.~85--104.

\bibitem{SMA}
{\sc N.~Sakamoto and A.~J. van~der Schaft}, {\em Analytical approximation
  methods for the stabilizing solution of the {H}amilton--{J}acobi equation},
  IEEE Transactions on Automatic Control, 53 (2008), pp.~2335--2350.

\bibitem{TAT}
{\sc E.~Tr{\'e}lat}, {\em Contr{\^o}le optimal: th{\'e}orie \& applications},
  vol.~865, Vuibert Paris, France, 2008.

\bibitem{trelat2018integral}
{\sc E.~Tr{\'e}lat and C.~Zhang}, {\em Integral and measure-turnpike properties
  for infinite-dimensional optimal control systems}, Mathematics of Control,
  Signals, and Systems, 30 (2018), p.~3.

\bibitem{trelat2018steady}
{\sc E.~Tr{\'e}lat, C.~Zhang, and E.~Zuazua}, {\em Steady-state and periodic
  exponential turnpike property for optimal control problems in hilbert
  spaces}, SIAM Journal on Control and Optimization, 56 (2018), pp.~1222--1252.

\bibitem{trelat2015turnpike}
{\sc E.~Tr{\'e}lat and E.~Zuazua}, {\em The turnpike property in
  finite-dimensional nonlinear optimal control}, Journal of Differential
  Equations, 258 (2015), pp.~81--114.

\bibitem{willems1971least}
{\sc J.~Willems}, {\em Least squares stationary optimal control and the
  algebraic {R}iccati equation}, IEEE Transactions on Automatic Control, 16
  (1971), pp.~621--634.

\bibitem{zanon2018economic}
{\sc M.~Zanon and T.~Faulwasser}, {\em Economic mpc without terminal
  constraints: Gradient-correcting end penalties enforce asymptotic stability},
  Journal of Process Control, 63 (2018), pp.~1--14.

\bibitem{zanon2016tracking}
{\sc M.~Zanon, S.~Gros, and M.~Diehl}, {\em A tracking mpc formulation that is
  locally equivalent to economic mpc}, Journal of Process Control, 45 (2016),
  pp.~30--42.

\end{thebibliography}
\bibliographystyle{siam}
\end{document}